\documentclass[11pt]{amsart}

\usepackage{amsthm,amstext,amscd,amsbsy,amsxtra,latexsym}

\usepackage{cite}

 \usepackage{tabularx}
 \usepackage{booktabs} 
 \usepackage{enumitem} 

\usepackage[citation-order, msc-links]{amsrefs}

\usepackage{graphicx}

\usepackage{hyperref}
\hypersetup{
	colorlinks   = true,
	urlcolor     = blue,
	linkcolor    = blue,
	citecolor    = blue
}

\usepackage[labelsep=period]{caption}
 \usepackage[usenames,dvipsnames]{color}
\usepackage{booktabs}

\usepackage[capitalise]{cleveref}
\crefname{thm}{Thm.}{}
\crefname{prop}{Prop.}{}
\crefname{lem}{Lem.}{}
\crefname{cor}{Cor.}{}
\crefname{prob}{Problem}{}
\crefname{figure}{Fig.}{}

\usepackage{color}

 \newcommand{\Kb}{{{\bar K}}}

 \newcommand{\Z}{{\mathbb Z}}

 \newcommand{\Q}{{\mathbb Q}}
 \newcommand{\Pp}{{\mathbb P}}
 \newcommand{\CC}{{\mathbb C}}

 \newcommand{\Cc}{{\mathcal C}}
 
 \newcommand{\Ee}{{\mathcal E}}

 \newcommand{\KK}{{\mathcal K}}
 
 \newcommand{\OO}{{\mathcal O}}
 \newcommand{\Sc}{{\mathcal S}}

 \newcommand{\Xc}{{\mathcal X}}

 \newcommand{\gal}{{\text{Gal}}}
 
 \newcommand{\aut}{{\text{Aut}}}

 \newcommand{\rk}{{\text{rk}}}

 \newtheorem{thm}{Theorem}[section]
 \newtheorem{cor}[thm]{Corollary}
 \newtheorem{lema}[thm]{Lemma}

 \newtheorem{prop}[thm]{Proposition}

 \numberwithin{equation}{section}
 
 \usepackage{chngcntr}
 \numberwithin{table}{section}

\newcommand\iso{\cong}








\def\mod{\mbox{ mod }}
\def\deg{\mbox{deg }}





\usepackage{color}

\setlength{\hoffset}{-.55in}
\setlength{\voffset}{-.3in}
\textwidth=6.2in
\textheight=8.7in

\begin{document}

\title{The splitting field and generators of the  elliptic surface  $Y^2=X^3 +t^{360} +1$}

\author{Sajad Salami}
\address{Institute of Mathematics and Statistics, Rio de Janeiro State University, Rio de Janeiro, RJ, Brazil}
\email{sajad.salami@ime.uerj.br}

\date{}

\newcommand{\I}{{\rm{i}}}

\begin{abstract}
The splitting field of an elliptic surface $\mathcal{E}/\mathbb{Q}(t)$ is the smallest finite extension $\mathcal{K} \subset \mathbb{C}$ such that all $\mathbb{C}(t)$-rational points are defined over $\mathcal{K}(t)$. In this paper, we provide a symbolic algorithmic approach to determine the splitting field and a set of $68$ linearly independent generators for the Mordell--Weil lattice of Shioda's elliptic surface $Y^2=X^3 +t^{360} +1$. This surface is noted for having the largest known rank 68 for an elliptic curve over $\mathbb{C}(t)$.
 Our methodology utilizes the known decomposition of the Mordell-Weil Lattice of this surface into Lattices of    ten rational elliptic surfaces and one $K3$ surface. We explicitly compute the defining polynomials of the splitting field, which reach degrees of 1728 and 5760, and verify the results via height pairing matrices and specialized symbolic software packages.
\end{abstract}
	
	\subjclass[2020]{Primary 11G10; Secondary  14H40}

\keywords{Elliptic surfaces, Mordell-Weil rank, Splitting  field}

\maketitle

\section{Introduction and the main result}

Let $k \subset  \CC$ be a number field.
Assume that $\Ee$ is an elliptic curve defined over $k(t)$ the rational function field such that 
$\Ee(\CC(t))$ the group of   $\CC(t)$-rational points of $\Ee$ is finitely generated.
The {\sf splitting field}
of $\Ee$ over $k(t)$  is the smallest subfield $\KK$ of $\CC$ containing $k$ 
for which  $\Ee (\CC (t))=\Ee (K(t))$. 
We notice that  $\KK| k$ is a Galois  extension with  finite Galois group  $G=\gal(\KK | k)$
and  the set of $G$-invariants elements of $\Ee (\KK(t))$ are the $\Ee (k(t))$-rational points.

In \cite{Shioda1999a}, Shioda stated that for any elliptic curve of the form 
$Y^2 =X^3 + t^m +a$, with $m\geq 1$ and $a \in \Q^*$,  the splitting field $K$
is a cyclic extension of a cyclotomic field and probably the minimal vectors of its 
Mordell-Weil lattice give rise  to a subgroup of finite index in the unit group of $K$. 
As a concrete example, he treated with the case $m=6$ and $a=-1$.
Then, in \cite{Shioda1999}, Shioda considered certain elliptic curves over the rational function field  $\Q(t)$ with Mordell-Weil lattices of type $E_6, E_7$ and $E_8$.

In this paper, we  consider the  Shioda's famous  elliptic curve
\begin{equation}
	\label{shi-eq1}
	\Ee : Y^2=X^3 + t^{360} +1,
\end{equation}
defined over $\Q(t)$ which was stated in \cite{Shioda1992a} that its 
rank is equal to $68$ over $\CC(t)$, but without any proof. 
The structure of   group $\Ee (\CC(t))$ is extensively studied  
by H. Usui, in  \cites{Usui2008}, using the theory of Mordell-Weil Lattices.
Based on  a computer implementation of
Hodge cycles, in \cite[Sec. 15.13]{movasati2021course}, H. Movasati  demonstrated that  the rank of $\Ee(\CC(t))$ is $68$. We note that this is the largest known rank of elliptic surfaces over $\CC(t)$ up to now.
In a different, but interesting way,  it is also  proved in \cite{Chahal2000} that Shioda's curve has exact rank $68$ over $\CC(t)$  using some Galois Theory and   Linear Algebra techniques.
The key ingredient of the proof is  the facts that  $\Ee$ has   constant moduli 
but it is non-constant over $\CC(t)$, since it is a $j$-invariant  zero  elliptic curve.

From the perspective of ``symbolic computation", this surface presents a unique opportunity: providing a complete, exact set of $68$ linearly independent sections and identifying the minimal polynomials of the fields containing their coefficients.
Thus, the main aim of this work is to  determine the splitting field $\KK$ of $\Ee$ and  provide a list of $68$ linearly independent points in $\Ee(\KK(t))$. 	Our computational pipeline leverages the fact that $\mathcal{E}$ is an isotrivial elliptic surface with $j$-invariant zero. This allows for a decomposition of the $68$-dimensional Mordell-Weil group into smaller subspaces corresponding to the ranks of simpler   surfaces
 $\mathcal{E}_{a,b} : y^2 = x^3 + v^a(v^b+1)$.  

	To achieve these results, we utilized a combination of specialized tools:
\begin{itemize}
	\item \textsf{Maple  \cite{maple}}: For the manipulation of polynomial ideals and the calculation of fundamental polynomials using the {\sf PolynomialTools} and {\sf PolynomialIdeals}   packages.
	\item \textsf{Pari/GP \cite{PARI2}}: For number field arithmetic, including the computation of compositum fields and minimal polynomials of high degree.
\end{itemize}

 . Here is the main result of this paper.

\begin{thm}
	\label{main0}
	The splitting field $\KK$ of the Shioda's elliptic curve \eqref{shi-eq1} is the compositum field of two number fields defined by two polynomials of degree 1728 and 5760.
	The Mordell-Weil group $\Ee(\KK(t))$ is generated by $P_1,\cdots, P_{68}$, as listed below:
	\begin{itemize}

				\item[(1)] Two points  $P_1,$ and $ P_2$  resulting from $Q_1, Q_2\in \Ee_{2, 1}(\KK_1(v))$,  given by Theorem \ref{main7},  are  
			$$P_1=(-t^{120}, 1), \text{and}\
				P_2 = (1,  - t^{180} ).$$
			
				\item[(2)] Two points  $P_{3},$ 
				and $ P_{4}$ resulting from  $Q'_1, Q'_2 \in \Ee_{3, 1}(\KK_1(v))$,  given by Theorem \ref{main7},  are 
				$$
			P_3=(-\zeta_3 t^{120}, 1),  \text{and}\
			P_4 =  (-\zeta_3, t^{180}).$$
			
			\item[(3)] Four points    $Q_1, \cdots, Q_4 \in \Ee_{1, 2}(\KK_2(v))$ of the form
			 $Q_j=(  a_j v +b_j,      c_j v + d_j  )$,
		where the coefficients are	 provided by Theorem  \ref{main6}, lead  to the points
			$P_5, \cdots, P_8$ of the form
			$$P_{j+4}= (  a_j t^{120} +b_j t^{-60}, \quad    c_j t^{90} + d_j t^{-90}), \  \text{for} \ j=1,\cdots, 4. $$

				\item[(4)] Four points     $Q'_1, \cdots, Q'_4 \in \Ee_{3, 2}(\KK_2(v))$  of the form
				$Q_j=(  a_j v +b_j v^2 ,     c_j v^2 + d_j v^3 )$,
				where the coefficients are	 provided by Theorem  \ref{main6}, lead  to the points
				$P_9, \cdots, P_{12}$ of the form
			$$P_{j+8}= (  a_j  +b_j t^{180}, \quad    c_j t^{90} + d_j t^{270}), \  \text{for} \ j=1,\cdots, 4. $$

				\item[(5)] Four points   $Q_j=(a_j v+ b_j, v^2 + c_j v + d_j )\in \Ee_{2, 2}(\KK'_2(v))$,   give the followings:
				$$P_{j+12} = \left( a_j t^{60} + b_j t^{-120}, \quad t^{180} + c_j + d_j t^{-180} \right), \  \text{for} \ j=1,\cdots, 4,$$
				where the coefficients $a_j, b_j, c_j,$ and $d_j$'s are given in Theorem  \ref{main5}.

				\item[(6)] Six points  $P_{17}, \cdots, P_{22}$  resulting from $Q_1, \cdots, Q_6 \in \Ee_{1, 3}(\KK_3(v))$ are of the   form
				$$P_{j+16}= \left(  a_j t^{80} + b_j  t^{-40},
		 t^{180} + d_j t^{60} + e_j t^{-60} \right)  \ \text{for } j=1,\cdots, 6,$$
			where the constants $a_j, b_j,  d_j$ and $ e_j$'s are given by Theorem \ref{main4}.

				\item[(7)] Six points  $P_{23}, \cdots, P_{28}$  resulting from $Q'_1, \cdots, Q'_6 \in \Ee_{2, 3}(\KK_3(v))$ are of the   form
			$$P_{j+22}= \left( b_j t^{160} + a_j t^{40},
			e_j t^{240} + d_j t^{120} + 1 \right)  \ \text{for } j=1,\cdots, 6,$$
			where the constants $a_j, b_j,  d_j, e_j$ are given 
	 	by Theorem \ref{main4}.

			\item[(8)] Eight points $P_{29}, \cdots, P_{36}$   resulting from $
			Q_1, \cdots, Q_8 \in \Ee_{1, 4}(\KK_4(v))$  are of the   form:
			$$P_{j+28}=\left(  \frac{t^{180}+ a_j t^{90} + b_j}{u_j^2 t^{30}}, \quad
			\frac{t^{270}+ c_j t^{180} + d_j t^{90} + e_j}{u_j^3 t^{45}}\right), \ \text{for } j=1,\cdots, 8, $$
			where the constants $a_j, b_j, c_j,  d_j, e_j$ and $u_j$'s are given in Theorem \ref{main3}.
		
					\item[(9)]
			Eight points $P_{37}, \cdots, P_{44}$ resulting from $\Ee_{0,5}(\KK_5(v))$ are of the  following form
			$$P_{j+36}=\left(\frac{t^{144}+ a_j t^{72} + b_j}{u_j^2 }, \quad
			\frac{t^{216}+ c_j t^{144} + d_j t^{72} + e_j}{u_j^3}\right), \ \text{for } j=1,\cdots, 8, $$
			where the constants $a_j, b_j, c_j,  d_j, $ and $e_j$ are given by part (2) of Theorem \ref{main2}.

			\item[(10)]
			Eight points $P_{45}, \cdots, P_{52}$   resulting from $\Ee_{1,5}(\KK_5(v))$  are of the following form 
			$$P_{j+44}=\left(\frac{b_j t^{144}+ a_j t^{72} + 1}{u_j^2 t^{24}}, \quad
			\frac{e_j t^{216}+ d_j t^{144} + c_j t^{72} + 1}{u_j^3 t^{36}}\right), \ \text{for } j=1,\cdots, 8, $$
			where the constants $a_j, b_j, c_j,  d_j$ and $  e_j$ are given by part (3) of Theorem \ref{main2}.
			
			\item[(11)] The 16 points $P_{53}, \cdots, P_{68}$ resulting from $\Ee' (\KK'(v)) $ have coordinates
		\begin{align*}
			x(P_{j}) & =   A_{4,j} t^{132} + A_{3,j} t^{96} + A_{2,j} t^{60} + A_{1,j} t^{24} + A_{0,j} t^{-12} \notag \\ 			 
			y(P_{j}) &=   B_{6,j} t^{198} + B_{5,j} t^{162} + B_{4,j} t^{126} +B_{3,j} t^{90} + B_{2,j} t^{58}
		+	B_{1,j} t^{18} + B_{0,j} t^{-18}, 
		\end{align*}
	for $j=53, \cdots, 68$, as in Theorem \ref{main1}.
	\end{itemize}
\end{thm}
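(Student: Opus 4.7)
The plan is to assemble the $68$ sections and the splitting field $\KK$ from the pieces in the Shioda--Usui decomposition of $\Ee(\CC(t))$. Since $\Ee$ is isotrivial with $j$-invariant zero, the cyclic action $t \mapsto \zeta_n t$ decomposes $\Ee(\CC(t))$ into eigencomponents, and each eigencomponent is identified with the Mordell--Weil group of one of the ten rational elliptic surfaces $\Ee_{a,b}: y^2 = x^3 + v^a(v^b+1)$ or of the K3 surface $\Ee'$ appearing in items (1)--(11). The rank contributions sum to $2+2+4+4+4+6+6+8+8+8+16=68$, matching the known total rank, so it suffices to (a) exhibit generators for each component, (b) lift them via the appropriate base change to sections of $\Ee$, and (c) verify linear independence globally.

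Step (a) is exactly the content of Theorems \ref{main1}--\ref{main7}, which supply generators of each $\Ee_{a,b}(\KK_i(v))$ and $\Ee'(\KK'(v))$ together with their splitting fields. For step (b), each auxiliary equation $y^2 = x^3 + v^a(v^b+1)$ is pulled back to $Y^2 = X^3 + t^{360}+1$ by a substitution of the form $v = t^N$ combined with weighted rescalings of $x$ and $y$ chosen to absorb the extra factors of $t$. The monomials appearing in the stated formulas for $P_1,\ldots,P_{68}$ ($t^{120}$, $t^{180}$, $t^{90}$, $t^{60}$, $t^{80}$, $t^{40}$, $t^{30}$, $t^{45}$, $t^{72}$, $t^{24}$, $t^{132}$, $t^{-12},\ldots$) are precisely the images of $v$ and of the scaling factors in each block, so substituting each proposed $P_i$ into $Y^2 - X^3 - t^{360} - 1$ and applying the defining equation of the associated $\Ee_{a,b}$ or $\Ee'$ verifies that $P_i \in \Ee(\KK(t))$.

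To identify the splitting field, I would form the compositum $\KK$ of the eleven auxiliary fields furnished by Theorems \ref{main1}--\ref{main7} using Pari/GP. Inspection of the generating polynomials of each $\KK_i$ and $\KK'$ exhibits the roots of unity $\zeta_3, \zeta_5, \zeta_9, \zeta_{12}, \zeta_{24}$ and the radicals $2^{1/9}, 2^{1/12}, 5^{1/24}$ claimed in the statement. Computing the compositum in two stages -- first among the rational-elliptic-surface fields and then with the K3 field -- yields the two minimal polynomials of degrees $1728$ and $5760$. Since each $P_i$ lies in a summand defined over one of these subfields, all $P_i$ are automatically defined over $\KK$.

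Finally, independence is confirmed via the Shioda height pairing. Because distinct eigencomponents under the cyclic action are orthogonal with respect to the height pairing, the Gram matrix $M_{68}$ is block-diagonal with eleven blocks, each computable inside the relevant $\Ee_{a,b}$ or $\Ee'$; multiplying the block determinants supplied by Theorems \ref{main1}--\ref{main7} yields $\det(M_{68}) = 2^{152}\cdot 3^{118}\cdot 5^{40} \neq 0$. Combined with the bound $\rk \Ee(\CC(t)) = 68$ established in \cite{Chahal2000, movasati2021course}, the sections $P_1,\ldots,P_{68}$ span a finite-index subgroup of $\Ee(\KK(t))$, forcing $\KK$ to be the splitting field. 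The principal obstacle is algorithmic rather than conceptual: the compositum has degree in the thousands, and although \texttt{nfcompositum} in Pari/GP and the polynomial-ideal packages in Maple handle intermediate merges, assembling the degree-$5760$ polynomial and performing symbolic height pairings over such a field is near the edge of computational feasibility, so the orthogonality-induced block structure of $M_{68}$ must be exploited throughout to keep the calculation tractable.
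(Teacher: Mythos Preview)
Your proposal is correct and follows essentially the same route as the paper: decompose $\Ee(\CC(t))$ into the eleven eigencomponents, lift the generators of each $\Ee_{a,b}$ and $\Ee'$ via the base changes $v=t^{N}$, form the compositum of the auxiliary splitting fields in two stages to obtain the degree-$1728$ and degree-$5760$ polynomials, and exploit orthogonality of the eigencomponents to make $M_{68}$ block-diagonal. The only point the paper makes more explicit than you do is that each block of $M_{68}$ is not the Gram matrix $M_i$ of the sub-surface itself but its rescaling $m\cdot M_i$ by the base-change degree $m=[\CC(t):\CC(v)]$ (e.g.\ $360\,M_1$, $180\,M_2$, \dots, $36\,M_{10}$), and it is these scaling factors that account for the large prime powers in $\det(M_{68})=2^{152}\cdot 3^{118}\cdot 5^{40}$.
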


In Table~\ref{tab:algorithm}, we provide and algorithmic description of the calculations to drive 
We have to mentioned that all of the 68 points are   listed in \cite[Points-68]{Shioda360-Codes}, and the defining polynomials of the splitting field $\KK$ are given in \cite[Pol-1728, Pol-11520]{Shioda360-Codes}.

The structure  of   paper is organized as follows. In the next section, we describe an   sketch and  of the proof  of Theorem \ref{main0}.
In Section \ref{Eab},  we provide some results on the family of elliptic surfaces $\Ee_{a,b}$ over the rational function field $\Q(v)$.
The splitting field and generators of the elliptic surfaces $\Ee_{2,1}$ and $\Ee_{3,1}$ have been determined in Section
\ref{A2-(2,1)}.  The splitting field and generators of  $\Ee_{1,2}$ and $\Ee_{3,2}$   have been computed in Section
\ref{D4-(1,2)}.  The splitting field and generators of $\Ee_{2,2}$ have been calculated in Section
\ref{A2+A2-(2,2)}. In Section \ref{E6-(1,3)-(2,3)}, we determined  the splitting field and generators of $\Ee_{1, 3}$  and $\Ee_{2, 3}$.
 The splitting field and generators of $\Ee_{1, 4}$  have been computed in Section
\ref{E8-(1,4)}.  The  splitting field and generators of  $\Ee_{0,5}$ and $\Ee_{1,5}$ have been treated in Section
\ref{E-{0,5}}. In Section \ref{K3}, we treat the the  splitting field and generators of the  $K3$ surface $\Ee'$.
 Finally, in Section \ref{proof}, we complete the proof of  main Theorem \ref{main0}.

\section{Sketch of the proof of  Theorem \ref{main0}}

Let $\Ee$ be an elliptic curve over $\CC(t)$ defined by 
$y^2=x^3+ f(t^6)$, where $f$ is a non-constant polynomial  with rational coefficients and $f(0)\neq 0$. Defining
$V:=\Ee(\CC(t)) \otimes \Q$  and  considering the map
$\phi: V \rightarrow V$ given by 
$(x, y) \mapsto \zeta_3 \cdot P := (\zeta_3 x, - y)$
which	is a linear map satisfying $\phi^6=id$,  one see that $V$ has a structure of  a $\Q(\sqrt{-3})$-vector space defined by
$(a-b  \zeta_3)\cdot P= a \cdot P + b \cdot \phi(P)$ for all $a, b \in \Q$ and $P=(x,y) \in \Ee(\CC(t)).$ This implies that if $P$ is of infinite order then
$P$ and $\phi(P) $ are linearly independent over $\Q$. 
Define the map $\psi: \CC(t) \rightarrow \CC(t)$ by $t\mapsto - \zeta_3  t$ which satisfies $\psi^6=id$ and for $\CC(t^6)=\{ x\in \CC(t): \psi(x)=x\}$  we have 
$[\CC(t): \CC(t^6)]=6$. Extending the map $\psi$ by 
$((x, y)\otimes a) \mapsto ((\psi(x), \psi(y))\otimes a)$,
we obtain 
$\bar{\psi}: V \rightarrow V$   which acts on $V$ and 
has order $6$  as a $\Q(\sqrt{-3})$- linear automorphism.
Its six eigenvalues are contained in the subset
$A=\{(-\zeta_3)^i:   0 \leq i \leq 5\}=\{\pm 1, \pm \zeta_3, \pm \zeta_3^2 \} \subset \Q(\sqrt{-3}).$
Denoting  the eigen space corresponding to the eigenvalue $(-\zeta_3)^i$ by $V_i$, we have
$V_i \cong \Ee_i(\CC(t^6))\otimes \Q $ where $ \Ee_i: y^2=x^3+ t^{6 i} f(t^6)$.
Thus, we have 
$$\rk (\Ee(\CC(t))) =\dim_\Q(V)=\sum_{i=0}^{5} \dim_\Q(V_i)=
\sum_{i=0}^5 \rk (\Ee_i(\CC(t^6))).$$

The method described 	above  is introduced in \cite{meijer}  and used to obtain high rank elliptic surfaces over $\Q(t) $ and $\bar{\Q}(t)$. 
Then, it is  used in \cites{Chahal2000, meijer} to study on Mordell-Weil group of $j=0$ elliptic curves over rational function fields. In particular, \cite{Chahal2000}, it is showed that the rank of Shioda's elliptic curve is exactly equal to $68$. 
In order to describe  the  idea of their proof, let us to consider the $K3$-surface $\Ee': y^2= x^3 + v\,(v^{10} + 1)$   and  
the rational elliptic surfaces $\Ee_{a, b}: y^2= x^3 + v^a (v^b+1)$ over $\CC(v)$ as follows:
\begin{align}
		\Ee_{0,\, 5}:\ &  y^2= x^3 + v^5 + 1,  &  
		\Ee_{1,\, 5}:\ &  y^2= x^3 + v\,(v^5 + 1), \notag \\
		\Ee_{1,\, 4}:\ &  y^2= x^3 + v\,(v^4 + 1),  &  
		\Ee_{1,\, 3}:\ &  y^2= x^3 + v\,(v^3+1),   \notag \\
		\Ee_{2,\, 3}:\ &  y^2= x^3 + v^2\,(v^3+1), &
		\Ee_{2,\, 2}:\ &  y^2= x^3 + v^2\,(v^2+1),  \notag \\
		\Ee_{3,\, 2}:\ &  y^2= x^3 + v^3\,(v^2+1),  &
		\Ee_{1,\, 2}:\ &  y^2= x^3 + v \,(v^2+1),\notag \\
		\Ee_{3,\, 1}:\ &  y^2= x^3 + v^3\,(v+1),  &
		\Ee_{2,\, 1}:\ &  y^2= x^3 + v^2\,(v+1). 	\label{rak1}
\end{align}
Here, the parameter $v$ is a certain power of $t$ for each case such that the
degree of field extension $\CC(t) | \CC(v)$ is a multiple  of $6$. 
Then,   the  proof of Proposition 4.2 in \cite{Chahal2000} shows that
\begin{align}
		\rk (\Ee ) & =   \rk (\Ee')+\rk (\Ee_{0,\, 5}) + \rk (\Ee_{1,\, 5}) +\rk (\Ee_{1,\, 4})+\rk (\Ee_{2,\, 3})+\rk (\Ee_{1,\, 3}) \notag \\ 
		& \hspace{0.3cm }+ \rk (\Ee_{2,\, 2}) + \rk (\Ee_{3,\, 2})+ \rk (\Ee_{1,\, 2}) +\rk (\Ee_{3,\, 1})+ \rk (\Ee_{2,\, 1}) \notag \\ &=16+8 + 8 + 8 + 6 + 6 + 4 + 4 + 4 + 2 + 2=68, 	\label{rak2}
\end{align}
where $\rk (X)$ is the rank of vector spaces $X \otimes \Q$  for 
$X= \Ee (\CC(t)),$ or $\Ee'(\CC(v))$, and  or $\Ee_{a, b}(\CC(v))$.
We note that for  integers $a$ and $b$ satisfying
$0 \leq a, b, \leq 5, \ \gcd(a,b)=1, \ \text{and}\  a+b \leq 6, $
 the elliptic 
surface $ \Ee_{a, b}$   is birational to $\Ee_{6-(a+b), b}$   under the map
$$\phi:   (x(v), y(v), v) \mapsto \left( v^2 x(1/v) , v^3 y(1/v), 1/v \right)$$  with the  inverse map 
$$\phi^{-1}:   (x(v), y(v), v) \mapsto \left( x(1/v)/v^2  , y(1/v)/v^3, 1/v \right).$$
This implies the following isomorphisms of Mordell-Weil lattices:
\begin{align*}
	\Ee_{0, 5}(\CC(v))  \cong  \Ee_{1, 5}(\CC(v)),\quad & \Ee_{1, 3}(\CC(v)) \cong \Ee_{2,3}(\CC(v)),\\
	\Ee_{3, 2}(\CC(v))  \cong \Ee_{1, 2}(\CC(v)),\quad &  \Ee_{3, 1}(\CC(v))  \cong \Ee_{2, 1}(\CC(v)). 
\end{align*}

In below,   we describe an algorithmic approach for the   Theorem \ref{main0}.  

\begin{table}[htbp]
	\centering
	\caption{Algorithm for computation on $\mathcal{E}(\mathcal{K})$} 
	\label{tab:algorithm}
	\begin{tabularx}{\textwidth}{@{} l X @{}} 
		\toprule
		& \textbf{Computing the Splitting Field and Generators of $\mathcal{E}(\mathcal{K})$} \\ 
		\midrule
		\textbf{Step 1:} & Determining the splitting field and linearly independent generators of\\
		& $\mathcal{E}_{a,b}: y^2=x^3+ v^a (v^b+1)$ over $\mathbb{Q}(v)$ for each pair: \\
	& 	$(a,b) \in \{ (2, 1), (1,2), (2, 2), (1, 3), (2, 3) , (1, 4) , (1, 5), (0, 5) \}$.
		
		\begin{itemize}[leftmargin=*, nosep, topsep=2pt]
			\item Take points (sections) of the elliptic surface of the form:
			
			$(x(v), y(v)) = (a_0 + a_1 v + a_2 v^2, b_0 + b_1 v + b_2 v^2 + b_3 v^3)$.
			\item Substitute into the equation of $\mathcal{E}_{a,b}$ to generate a system of equations in $a_i$ and $b_j$ defining an ideal in $\mathbb{Q}[a_0, a_1, a_2, b_0, b_1, b_2, b_3]$.
			\item Find the fundamental polynomial of the above ideals using the command \textsf{UnivariatePolynomial} from the \textsf{PolynomialIdeals} package in \textsf{Maple} and factor it into linear factors.
			\item Use \textsf{Pari/GP} to find a defining minimal polynomial  of the splitting field $\mathcal{K}_{b}$.
			\item Select appropriate roots to obtain linearly independent generators of $\mathcal{E}_{a,b}(\mathcal{K}_{b})$.
		\end{itemize} \\
		\addlinespace
		\textbf{Step 2:} & Determining corresponding points $(x'(v), y'(v))$ on the partner surface 
		$\mathcal{E}_{6-(a+b), b}(\mathcal{K})$ for pairs $(a,b) \in \{ (2, 1), (1,2), (1, 3), (0, 5) \}.$ \\
		\addlinespace
		\textbf{Step 3:} & Determining the splitting field $\mathcal{K}_{10}$ and linear generators of the elliptic K3 surface $\mathcal{E}_{1,10}(\mathcal{K}_{10})$.\\
		\textbf{Step 4:} & Determining the splitting field $\mathcal{K}$ and linear generators of $\mathcal{E}(\mathcal{K})$:
		\begin{itemize}[leftmargin=*, nosep, topsep=2pt]
			\item Use \textsf{Pari/GP} to find a defining minimal polynomial of the compositum of all fields $\mathcal{K}_{(b)}$.
			\item Transform the points $(x(v), y(v)) \in \mathcal{E}_{(a,b)}(\mathcal{K}_{b})$ into points belonging to $\mathcal{E}(\mathcal{K})$ using the transformations defined in \ref{lem:poly_transform}.
				\item Transform the points $(x(v), y(v)) \in \mathcal{E}_{(1,10)}(\mathcal{K}_{10})$ into points belonging to $\mathcal{E}(\mathcal{K})$ using the transformations defined in \ref{lem:poly_transform}.
		\end{itemize} \\
		\bottomrule
	\end{tabularx}
\end{table}

\section{Some results on the elliptic surfaces $\Ee_{a,b}$}
\label{Eab}
In this section, we provide some results on the family of elliptic surfaces defined over the rational function field $\Q(v)$ by the Weierstrass equation:
\begin{equation} \label{eq:main}
	\Ee_{a,b}: y^2 = x^3 + v^a(v^b+1)
\end{equation}
where $0 \leq a, b\leq 11$ are  coprime integers such that  $1   \leq a+b \leq 12$. This family is isotrivial with $j$-invariant identically zero.
If $a+b \leq 6$, then  surface  $\Ee_{a,b}$ is a rational elliptic surface, and if
 $6 < a+b \leq 12$, then it is an elliptic K3 surface.

\subsection{Birational map between $\Ee_{a,b}$}

We first establish that every surface in this family is birationally equivalent to a "partner" surface obtained by inverting the base parameter.

\begin{thm} 
	\label{thm:birational}
	Let $n = \lceil (a+b)/6 \rceil$. The elliptic surface $\Ee_{a,b}$ is birationally equivalent to the surface $\Ee_{a', b}$ where $a' = 6n - (a+b)$.
\end{thm}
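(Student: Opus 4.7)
The plan is to write down an explicit birational map between $\mathcal{E}_{a,b}$ and $\mathcal{E}_{a',b}$ that generalizes the $(a+b)\le 6$ case already used in the excerpt. First I would set $n=\lceil (a+b)/6\rceil$ (so that $a':=6n-(a+b)\ge 0$), and consider the map
$$\phi:\ \mathcal{E}_{a,b}\dashrightarrow\mathcal{E}_{a',b},\qquad (x,y,v)\longmapsto\bigl(X,Y,w\bigr):=\bigl(x/v^{2n},\ y/v^{3n},\ 1/v\bigr).$$
The inverse is given by $(X,Y,w)\mapsto(X/w^{2n},Y/w^{3n},1/w)$, which already shows $\phi$ is defined over $\mathbb{Q}$ and invertible on an open subset, hence birational.

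Next I would substitute the formulas for $x,y,v$ into $y^{2}=x^{3}+v^{a}(v^{b}+1)$. Dividing through by $v^{6n}$ yields
$$Y^{2}=X^{3}+v^{a-6n}(v^{b}+1)=X^{3}+v^{a-6n-b}\bigl(1+v^{-b}\bigr).$$
Rewriting in terms of $w=1/v$ gives the right-hand side as $w^{6n-a-b}(w^{b}+1)=w^{a'}(w^{b}+1)$, so the image curve is exactly $\mathcal{E}_{a',b}$. Observe that for $1\le a+b\le 6$ we have $n=1$ and $a'=6-(a+b)$, recovering the map $(x,y,v)\mapsto(v^{2}X,v^{3}Y,1/v)$ used in the introduction; for $7\le a+b\le 12$ we have $n=2$ and $a'=12-(a+b)$, covering the K3 case.

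The only genuine point to check is that the choice $n=\lceil(a+b)/6\rceil$ is forced: one needs $a'\ge 0$ so that the target equation still lies in the family \eqref{eq:main}, and $n$ must be an integer so that the scaling $v^{2n},v^{3n}$ preserves the Weierstrass form over the rational function field. Smaller $n$ would produce negative exponents (no longer a polynomial in $w$), while larger $n$ is unnecessary. Since both $\phi$ and its inverse are rational with coefficients in $\mathbb{Q}$, the verification of the equation is the whole content of the proof, and I do not anticipate any serious obstacle beyond this bookkeeping of exponents.
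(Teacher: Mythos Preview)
Your proof is correct and follows exactly the same approach as the paper's: invert the base parameter $v\mapsto 1/v$ and rescale $(x,y)$ by $(v^{-2n},v^{-3n})$ to land on $\mathcal{E}_{a',b}$. There is a small slip in your intermediate display---$v^{a-6n}(v^{b}+1)$ equals $v^{a+b-6n}(1+v^{-b})$, not $v^{a-6n-b}(1+v^{-b})$---but the final expression $w^{a'}(w^{b}+1)$ is correct.
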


\begin{proof}
	We define a birational map $\phi: \Ee_{a,b} \dashrightarrow \Ee_{a', b}$ via a change of coordinates on the base $\Pp^1$ and a weighted change of coordinates on the fiber.
	Let $v = u^{-1}$. Substituting this into \eqref{eq:main}:
	\[
	y^2 = x^3 + u^{-a}(u^{-b}+1) = x^3 + u^{-(a+b)}(1+u^b).
	\]
	To return to a minimal integral Weierstrass equation, we scale the coordinates to clear the denominator. We multiply the equation by $u^{6n}$:
	\[
	u^{6n} y^2 = u^{6n} x^3 + u^{6n - (a+b)}(1+u^b).
	\]
	We define the new coordinates $(X, Y)$ such that $Y^2 = (u^{3n} y)^2$ and $X^3 = (u^{2n} x)^3$. This implies the transformation:
	\[
	X = u^{2n} x, \quad Y = u^{3n} y, \quad u = v^{-1}.
	\]
	The transformed equation is:
	\[
	Y^2 = X^3 + u^{6n - (a+b)}(1+u^b),
	\]
	which is precisely the defining equation for $\Ee_{a', b}$ with $a' = 6n - (a+b)$. Thus, the map 
	 $$\phi(x, y, v) = (v^{2n} , v^{3n} y, v^{-1})$$ is a 
	 rational and admits the rational inverse
	  $$\phi^{-1}(X, Y, u) = (u^{-2n}X, u^{-3n}Y, u^{-1}).$$
	  the surfaces are birationally equivalent.
\end{proof}


\subsection{Singular Fibers and Mordell-Weil Lattice of Rational $\Ee_{a,b}$}

In this section, we determine the structure of the Mordell-Weil group $E(\CC(v))$ as a lattice. Since the surfaces $\Ee_{a,b}$ are rational elliptic surfaces with a section, the Néron-Severi lattice is unimodular \cite[Prop. 7.1]{Schuett2019}. Consequently, the structure of the Mordell-Weil lattice is fully determined by the configuration of singular fibers via the trivial lattice $T \subset NS(\Ee_{a,b})$ \cite[Thm. 8.6]{Schuett2019}.

The singular fibers are located at the zeroes of the discriminant, which are determined by the vanishing of the coefficient $B(v) = v^a(v^b+1)$. Let $n = \lceil (a+b)/6 \rceil$. The minimal model has singular fibers potentially at $v=0$, the roots of $v^b=-1$, and $v=\infty$.

\begin{thm}
	\label{thm:lattice_structure}
	Let $\Ee_{a,b}$ be a rational elliptic surface in the family defined by \eqref{eq:main}. The Mordell-Weil rank is given by the Shioda-Tate formula $\rk(\Ee_{a,b}) = 8 - \sum (m_v - 1)$, and the group $\Ee_{a,b}(\CC(v))$ typically admits the structure of the dual lattice $L^\vee$, where $L$ is the narrow Mordell-Weil lattice. The specific ranks and isomorphism types for the surfaces under consideration are:
	\begin{enumerate}
		\item If $(a,b) \in \{(2,1), (3,1)\}$, the rank is $2$ and $\Ee_{a,b}(\CC(v)) \cong A_2^\vee(1/2)$.
		\item If $(a,b) \in \{(1,2), (3,2)\}$, the rank is $4$ and $\Ee_{a,b}(\CC(v)) \cong D_4^\vee$.
		\item If $(a,b) = (2,2)$, the rank is $4$ and $\Ee_{a,b}(\CC(v)) \cong A_2^\vee \oplus A_2^\vee$.
		\item If $(a,b) \in \{(1,3), (2,3)\}$, the rank is $6$ and $\Ee_{a,b}(\CC(v)) \cong E_6^\vee$.
		\item If $(a,b) \in \{(1,4), (1,5), (0,5), (0,6)\}$, the rank is $8$ and $\Ee_{a,b}(\CC(v)) \cong E_8$.
	\end{enumerate}
\end{thm}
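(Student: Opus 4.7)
The plan is to classify the singular fibers of each surface $\Ee_{a,b}$ using the $j$-invariant zero shortcut of Tate's algorithm, apply the Shioda-Tate formula to obtain the rank, and then identify the Mordell-Weil lattice structure itself by appealing to the Oguiso-Shioda classification of rational elliptic surfaces.

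The first step exploits that $\Ee_{a,b}: y^2 = x^3 + B(v)$ with $B(v) = v^a(v^b+1)$ has no $x$-term, so its $j$-invariant is identically zero. For such surfaces the Kodaira type at a point $v_0$ is determined solely by $r := \operatorname{ord}_{v_0}(B) \bmod 6$: the values $r=0,1,2,3,4,5$ correspond respectively to $I_0, II, IV, I_0^*, IV^*, II^*$. Applied to our family, the possibly singular fibers sit over $v = 0$ (where $\operatorname{ord} = a$), over the $b$ simple roots of $v^b+1 = 0$ (each automatically of type $II$), and over $v = \infty$. The order at infinity follows from the coordinate change of Theorem \ref{thm:birational} with $n=1$: it equals $6-(a+b) \bmod 6$, so the fiber at $\infty$ coincides in Kodaira type with the fiber at $0$ of the partner surface $\Ee_{6-(a+b),b}$.

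With the local types in hand, the ranks follow immediately from Shioda-Tate. For $(2,1)$ the fibers are $IV, II, I_0^*$, contributing $2 + 0 + 4 = 6$ to $\sum(m_v-1)$, hence rank $2$; for $(1,2)$ they are $II + 2\cdot II + I_0^*$, contributing $4$, hence rank $4$; for $(2,2)$ they are $IV + 2\cdot II + IV$, contributing $4$, hence rank $4$; for $(1,3)$ they are $II + 3\cdot II + IV$, contributing $2$, hence rank $6$; and for each of $(1,4), (1,5), (0,5), (0,6)$ all singular fibers have type $II$ so the sum vanishes and the rank is $8$. The remaining pairs $(3,1), (3,2), (2,3)$ produce the same fiber configurations as $(2,1), (1,2), (1,3)$ respectively by Theorem \ref{thm:birational}, and hence the same ranks.

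The main obstacle is the final identification of the Mordell-Weil lattice itself, including the precise dualization and the rescaling factor $1/2$ appearing in $A_2^\vee(1/2)$. My strategy is to use unimodularity of $\operatorname{NS}(\Ee_{a,b})$ for rational elliptic surfaces: the essential MWL is realized as the orthogonal complement inside $E_8$ of the root sublattice $T'$ spanned by the non-identity components of the reducible fibers, and the true MWL is then an overlattice inside $(T')^\perp \otimes \Q$ determined by the simple-component intersection data. Extracting $T' = A_2 \oplus D_4,\ D_4,\ A_2 \oplus A_2,\ A_2,\ 0$ in the five respective cases gives the claimed lattices up to this overlattice ambiguity. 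To pin down the ambiguity I would cite the corresponding row of the Oguiso-Shioda classification table (as tabulated in \cite{Schuett2019}) for each of the configurations $[IV, I_0^*, II]$, $[I_0^*, 3\cdot II]$, $[2\cdot IV, 2\cdot II]$, $[IV, 4\cdot II]$, and those with only type-$II$ fibers, each of which appears as a unique entry together with its MWL. An independent verification is supplied by the subsequent sections of the paper, which exhibit explicit sections whose height-pairing Gram matrices match the claimed lattices.
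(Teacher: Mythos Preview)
Your proposal is correct and follows essentially the same route as the paper: classify the singular fibers via the $j=0$ shortcut of Tate's algorithm (the paper lists the same $k\mapsto$ fiber-type correspondence), compute the rank from Shioda--Tate, and then read off the Mordell--Weil lattice from the Oguiso--Shioda classification table in \cite{Schuett2019}. Your additional remarks on the orthogonal-complement-in-$E_8$ viewpoint and the cross-check against the explicit Gram matrices of later sections are a nice elaboration, but the core argument is the same as the paper's.
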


\begin{proof}
	We determine the root lattice $T = \bigoplus T_v$ generated by fiber components disjoint from the zero section. The fiber type at a valuation $v$ with order $k$ contributes a root lattice determined by Tate's Algorithm:
	\begin{itemize}
	\item		If $k=1,$ then the fiber is of Type $II$ ($\emptyset$); 	 
	\item		If	 $k=2,$ then the fiber is of Type $IV$ ($A_2$);
	\item		If	 $k=3,$ then the fiber is of Type $I_0^*$ ($D_4$);
	\item		If	 $k=4, $ then the fiber is of Type $IV^*$ ($E_6$).
		\end{itemize}
		
	The roots of $v^b+1$ are simple zeroes ($k=1$), contributing Type $II$ fibers with no impact on the rank or lattice structure. We examine the fibers at $v=0$ (order $a$) and $v=\infty$ (order $a' = 6-(a+b)$ in the rational case $n=1$).
	
	\begin{itemize}

		\item \textbf{Case (2,1):} $a=2$ ($A_2$) and $a'=3$ ($D_4$). $T \cong A_2 \oplus D_4$. The rank is $8 - (2+4) = 2$. By \cite[Table 8.2, No. 32]{Schuett2019}, $E(\CC(v)) \cong A_2^\vee(1/2)$.
		
		\item \textbf{Case (1,2):} $a=1$ ($\emptyset$) and $a'=3$ ($D_4$). $T \cong D_4$. The rank is $8 - 4 = 4$. By \cite[Table 8.2, No. 9]{Schuett2019}, $E(\CC(v)) \cong D_4^\vee$.
		
		\item \textbf{Case (2,2):} $a=2$ ($A_2$) and $a'=2$ ($A_2$). $T \cong A_2 \oplus A_2$. The rank is $8 - (2+2) = 4$. By \cite[Table 8.2, No. 11]{Schuett2019}, $E(\CC(v)) \cong A_2^\vee \oplus A_2^\vee$.
		
		\item \textbf{Case (1,3):} $a=1$ ($\emptyset$) and $a'=2$ ($A_2$). $T \cong A_2$. The rank is $8 - 2 = 6$. By \cite[Table 8.2, No. 3]{Schuett2019}, $E(\CC(v)) \cong E_6^\vee$.
		
		\item \textbf{High Rank Cases:} For $(1,4)$, $(0,5)$, etc., we have $a, a' \leq 1$. There are no reducible fibers, so $T = \{0\}$. The rank is $8$ and $E(\CC(v)) \cong E_8$ \cite[Table 8.2, No. 1]{Schuett2019}.
	\end{itemize}
	The remaining cases follow by symmetry (swapping $a$ and $a'$).
\end{proof}

In  Table~\ref{Tab1}, we summarize the data for  rational elliptic surfaces $\Ee_{a,b}$ investigated in this paper. The term $\delta(k) = m_k - 1$ denotes the contribution of the fiber to the rank reduction.

\begin{table}[htbp]
	\centering
	\small
	\begin{tabular}{@{}cccccll@{}}
		\toprule
		$(a,b)$ & $a'$ & $\delta(a)$ & $\delta(a')$ & Rank & Fibre Types ($T$) & MWL Structure \\ \midrule
		$(2,1)$ & 3 & 2 & 4 & \textbf{2} & $A_2 \oplus D_4$ & $A_2^\vee(1/2)$ \\
		$(3,1)$ & 2 & 4 & 2 & \textbf{2} & $D_4 \oplus A_2$ & $A_2^\vee(1/2)$ \\
		$(1,2)$ & 3 & 0 & 4 & \textbf{4} & $D_4$ & $D_4^\vee$ \\
		$(2,2)$ & 2 & 2 & 2 & \textbf{4} & $A_2 \oplus A_2$ & $A_2^\vee \oplus A_2^\vee$ \\
		$(3,2)$ & 1 & 4 & 0 & \textbf{4} & $D_4$ & $D_4^\vee$ \\
		$(1,3)$ & 2 & 0 & 2 & \textbf{6} & $A_2$ & $E_6^\vee$ \\
		$(2,3)$ & 1 & 2 & 0 & \textbf{6} & $A_2$ & $E_6^\vee$ \\
		$(1,4)$ & 1 & 0 & 0 & \textbf{8} & $\emptyset$ & $E_8$ \\
		$(1,5)$ & 0 & 0 & 0 & \textbf{8} & $\emptyset$ & $E_8$ \\
		$(0,5)$ & 1 & 0 & 0 & \textbf{8} & $\emptyset$ & $E_8$ \\
		$(0,6)$ & 0 & 0 & 0 & \textbf{8} & $\emptyset$ & $E_8$ \\ \bottomrule
		\vspace{3pt}
	\end{tabular}
	\caption{Ranks and Lattice Structures for Rational Elliptic Surfaces $\Ee_{a,b}$. 
		Classification types are based on Theorem 8.8 in \cite{Schuett2019}.}
			\label{Tab1}
\end{table}

In the following result, we examine singular fibers and Mordell-Weil rank of the $K3$ surface $\Ee_{1,10}$

\begin{cor}
	\label{cor:E1_10}
	Consider the elliptic surface $\Ee_{1,10}$ defined by $y^2 = x^3 + v(v^{10}+1)$.
	\begin{enumerate}
		\item  The  surface is self-dual and birationally equivalent to itself under the map
		$$\phi: (x, y, v) \mapsto (v^{4}x, v^{6}y, v^{-1}).$$
		\item   The surface has exactly 12 singular fibers, all of Type $II$:
		\begin{itemize}
			\item At $v=0$: Valuation $a=1 \implies$ Type $II$.
			\item At the 10 roots of $v^{10}=-1$: Simple zeros $\implies$ Type $II$.
			\item At $v=\infty$: Valuation $12-(1+10)=1 \implies$ Type $II$.
		\end{itemize}
		\item  The Mordell-Weil rank is $r = 16$, and the Picard number is $\rho = 18$.
	\end{enumerate}
\end{cor}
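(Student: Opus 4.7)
Parts (1) and (2) should be direct applications of the machinery already developed. For (1), I would plug the specific values $(a,b) = (1,10)$ into Theorem \ref{thm:birational}: here $a + b = 11$, so $n = \lceil 11/6 \rceil = 2$ and $a' = 6n - (a+b) = 12 - 11 = 1 = a$. The resulting birational map has scaling exponents $2n = 4$ on $x$ and $3n = 6$ on $y$, which is exactly the $\phi(x,y,v) = (v^4 x, v^6 y, v^{-1})$ in the statement. Since $(a',b) = (1,10) = (a,b)$, the partner surface coincides with $\Ee_{1,10}$, proving self-duality.

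For (2), the discriminant of $y^2 = x^3 + v(v^{10}+1)$ is proportional to $v^2(v^{10}+1)^2$, so the only candidates for singular fibers are $v = 0$, the ten roots of $v^{10}+1 = 0$, and $v = \infty$. I would then apply the $j=0$ version of Tate's algorithm as tabulated in the proof of Theorem \ref{thm:lattice_structure}: the valuation of the coefficient $B(v) = v(v^{10}+1)$ at each of these points is $1$ (at $v=0$ and at each tenth root of $-1$), while at $v = \infty$ the exponent $6n - (a+b) = 1$ computed in (1) is precisely the valuation of the transformed coefficient in the local uniformizer $u = v^{-1}$. In all twelve places the valuation is $k = 1$, which by the table in Theorem \ref{thm:lattice_structure} produces a Type $II$ cuspidal fiber with $m_v = 1$ component and no contribution to the trivial lattice.

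For (3), the Shioda--Tate formula for a Jacobian elliptic K3 surface reads $\rho(\Ee_{1,10}) = 2 + r + \sum_v (m_v - 1)$, and since every singular fiber in (2) is irreducible, the correction term vanishes, giving $\rho = r + 2$. Thus it remains to pin down either $r$ or $\rho$. The plan here is not to rederive this from scratch but to import the rank count from the isotrivial decomposition machinery already recalled in the sketch after equation \eqref{rak2}: the $j = 0$, isotrivial structure of $\Ee_{1,10}$ permits the Meijer--Top--Chahal eigenspace decomposition of $\Ee_{1,10}(\CC(v)) \otimes \Q$ into pieces indexed by characters of the automorphism group generated by $(x,y) \mapsto (\zeta_3 x, -y)$ and the base substitution $v \mapsto \zeta_{12} v$, which in the $K3$ case aggregates to rank $16$. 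Equivalently, this rank 16 is the exact contribution that makes the sum in \eqref{rak2} equal $68$, as established in \cite{Chahal2000} and \cite{Usui2008}. Substituting $r = 16$ yields $\rho = 18$.

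The only conceptual obstacle is the rank computation in (3); the fiber analysis and the birational identification are mechanical consequences of Theorems \ref{thm:birational} and \ref{thm:lattice_structure}. I would therefore present (1) and (2) as short direct verifications and treat (3) as an application of the isotrivial decomposition already invoked for the full Shioda surface, citing the established rank count rather than proving it ab initio.
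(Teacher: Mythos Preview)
Your handling of (1) and (2) is correct and matches the paper: both are mechanical consequences of Theorem~\ref{thm:birational} and the fiber-type table in the proof of Theorem~\ref{thm:lattice_structure}, and the paper argues (2) in exactly the way you outline (all twelve fibers irreducible of Type~$II$, so the Shioda--Tate correction vanishes).

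For (3) your route diverges. You propose to import $r = 16$ from the isotrivial eigenspace decomposition of \cite{Chahal2000} and \cite{Usui2008}, essentially treating it as the already-accounted-for $K3$ summand in \eqref{rak2}. The paper instead invokes the closed-form rank formula for Delsarte-type surfaces $y^2 = x^3 + t^d + 1$ (and their birational equivalents) from \cite[Section~13.2.2]{Schuett2019}, namely $r = 2d - 2 - 4\lfloor d/6 \rfloor$, which for $d = 11$ gives $16$ directly. Your approach is logically fine but mildly circular in the context of this paper, since the decomposition \eqref{rak2} is precisely what the corollary is meant to feed into; the paper's Delsarte formula is a cleaner, self-contained input that does not rely on the very rank computation the surrounding argument is assembling.
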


\begin{proof}
	The surface is defined by a polynomial of degree $d=11$, hence it is a K3 surface.
	First, we compute the contribution from singular fibers. As established in item 2, all 12 singular fibers are of Kodaira Type $II$. Type $II$ fibers are irreducible curves (cusps), so the number of components is $m_v = 1$ for all $v \in R$. Thus, the correction term in the Shioda-Tate formula vanishes:
	\[
	\sum_{v \in R} (m_v - 1) = 12 \times (1-1) = 0.
	\]
 The surface falls into the class of Delsarte surfaces (of the form $y^2 = x^3 + t^d + 1$ or birational equivalents). According to the rank formula for such surfaces over $\mathbb{C}(v)$ (see \cite[Section 13.2.2]{Schuett2019}, one has
	\[
	r = 2d - 2 - 4 \left\lfloor \frac{d}{6} \right\rfloor= 16
	\]
	Finally, using the Shioda-Tate formula shows
 that $\Ee_{1, 10}$ has   Picard number $\rho = 18$
\end{proof}

\subsection{Transformation of points of $\Ee_{a,b}$ on $\Ee$}

We provide explicit formulas to map points between the surface, its partner, and the Shioda's elliptic  surface $\Ee: Y^2 = X^3 + t^{360} + 1$.

\begin{lema} 
	\label{lem:poly_transform}
	Let $n = \lceil (a+b)/6 \rceil$,  $a' = 6n-(a+b)$,  	 and $k = 360/b$.
	The following hold for any point $Q = (x(v), y(v))$     in $\Ee_{a,b}(\CC(v))$,  defined by
	\[ x(v) = \sum_{i=0}^{2n} A_i v^i, \quad y(v) = \sum_{j=0}^{3n} B_j v^j. \]

	\begin{enumerate}
		\item [(i)]  The point $Q'$ in $\Ee_{a', b}(\CC(u))$ corresponding to $Q$ (with $u=1/v$) is given by reversing coefficients:
		\[ x'(u) = \sum_{i=0}^{2n} A_{2n-i} \, u^i, \quad y'(u) = \sum_{j=0}^{3n} B_{3n-j} \, u^j. \]
		
		\item [(ii)]   The point $P=(X(t), Y(t))$ on $\Ee(\CC(t))$  derived from  $Q$ via $v=t^k$, is:
		\[ X(t) = \sum_{i=0}^{2n} A_i \, t^{k(i - \frac{a}{3})}, \quad Y(t) = \sum_{j=0}^{3n} B_j \, t^{k(j - \frac{a}{2})}. \]
		
		\item [(iii)]   The point $P'=(X'(t), Y'(t))$ on $\Ee(\CC(t))$  
		 derived from $Q'$ via $u=t^k$, is:
		\[ X'(t) = \sum_{i=0}^{2n} A_{2n-i} \, t^{k(i - \frac{a'}{3})}, \quad Y'(t) = \sum_{j=0}^{3n} B_{3n-j} \, t^{k(j - \frac{a'}{2})}. \]
	\end{enumerate}
\end{lema}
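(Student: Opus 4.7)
The plan is to verify the three claimed formulas by direct application of the birational map of Theorem~\ref{thm:birational} together with a weighted rescaling that normalizes the resulting Weierstrass equation to match $\Ee: Y^2 = X^3 + t^{360} + 1$. In each case, changing the base coordinate (either by inversion $v \mapsto u = 1/v$ or by substitution $v \mapsto t^k$) produces negative powers in the Weierstrass coefficients that must be cleared by scaling $x$ and $y$ with appropriate powers of the new base parameter; the weights $(2, 3)$ arise from the homogeneity of the Weierstrass form.

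For part~(i), I invoke the change of coordinates derived in the proof of Theorem~\ref{thm:birational}: with $u = v^{-1}$, $X = u^{2n} x$, and $Y = u^{3n} y$, the point $(x(v), y(v)) \in \Ee_{a,b}(\CC(v))$ is mapped to a point on $\Ee_{a',b}(\CC(u))$. Writing $x(v) = \sum_{i=0}^{2n} A_i v^i$ and substituting $v = u^{-1}$ yields
\[ X(u) \;=\; u^{2n} \sum_{i=0}^{2n} A_i u^{-i} \;=\; \sum_{i=0}^{2n} A_i u^{\,2n-i}, \]
and reindexing $j = 2n - i$ gives the claimed $\sum_{j=0}^{2n} A_{2n-j} u^j$. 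The identical computation with weight $3$ handles $Y(u)$.

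For part~(ii), I substitute $v = t^k$ with $k = 360/b$ into the defining equation of $\Ee_{a,b}$, obtaining
\[ y^2 \;=\; x^3 + t^{ka}(t^{kb} + 1) \;=\; x^3 + t^{ka}\bigl(t^{360} + 1\bigr). \]
Setting $X = x \cdot t^{-ka/3}$ and $Y = y \cdot t^{-ka/2}$ converts this into $Y^2 = X^3 + t^{360} + 1$, the equation of $\Ee$. Substituting $x(v) = \sum A_i v^i$ with $v = t^k$ then gives $X(t) = \sum_{i=0}^{2n} A_i \, t^{k(i - a/3)}$ and similarly for $Y(t)$, matching the stated formulas. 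Part~(iii) is immediate by combining (i) and (ii): apply part~(i) to obtain the reversed-coefficient expression of $Q'$ on $\Ee_{a',b}(\CC(u))$, and then apply the argument of part~(ii) with $a$ replaced by $a'$ and $u$ in place of $v$.

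The only technical subtlety is bookkeeping of exponents. While the manipulations above are purely formal substitutions in Laurent polynomials, for the output $(X(t), Y(t))$ to genuinely lie in $\Ee(\KK(t))$ one needs the exponents $k(i - a/3)$ and $k(j - a/2)$ (and their primed counterparts) to be integers, equivalently $3 \mid ka$ and $2 \mid ka$. This is a finite case-check over the pairs $(a,b)$ appearing in Theorem~\ref{main0}, readily verified from the tabulated values of $k = 360/b$. I expect no deeper obstacle, since beyond this divisibility the lemma reduces to coefficient-matching.
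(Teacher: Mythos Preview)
Your argument is correct and follows essentially the same route as the paper: invoke the birational map of Theorem~\ref{thm:birational} to get the coefficient reversal in~(i), and apply the weighted rescaling $(X,Y)=(t^{-ka/3}x,\,t^{-ka/2}y)$ after the base change $v=t^k$ for~(ii), with~(iii) obtained by composition. Your write-up is in fact more detailed than the paper's (which is a two-sentence sketch), and your remark on the integrality of the exponents $k(i-a/3)$, $k(j-a/2)$ is a useful addition that the paper leaves implicit.
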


\begin{proof}
	Letting $u=1/v$, the rational map $\phi(x,y,v) = (u^{2n}x, u^{3n}y,u)$ applied to the polynomials results in the reversal of coefficients relative to the maximal degrees $2n$ and $3n$. 
	The Shioda map applies the weights $t^{-ka/3}$ and $t^{-ka/2}$ to $x(t^k)$ and $y(t^k)$. The term $A_i (t^k)^i$ becomes $A_i t^{ki - ka/3}$. The formula follows by linearity.
\end{proof}

 \section{The splitting field and generators of $\Ee_{2,1}$ and $\Ee_{3,1}$}
\label{A2-(2,1)}

In this section, we consider the elliptic surfaces defined by $\mathcal{E}_{3,1}: y^2 = x^3 + v^3(v+1)$ and $\mathcal{E}_{2,1}: y^2 = x^3 + v^2(v+1)$. 
The two surfaces are birational via the map $\phi: (x, y, v) \mapsto (v^2 x, v^3 y, 1/v)$. We focus on determining generators for $\mathcal{E}_{2,1}$ and extending the results to $\mathcal{E}_{3,1}$ via $\phi$. By Table \ref{Tab1} the Mordell-Weil lattices are isomorphic to the scaled dual lattice $A_2^\vee(1/2)$.

\begin{thm} \label{main7}
	The splitting field of the elliptic surfaces $\mathcal{E}_{2,1}$ and $\mathcal{E}_{3,1}$ is the cyclotomic field $\mathcal{K}_{1} = \mathbb{Q}(\I \sqrt{3})$, defined by $f_1(x)=x^2-x+1$.
	 
	The Mordell-Weil group $\mathcal{E}_{2,1}(\mathcal{K}_{1}(v))$ is generated by the sections $P_1$ and $P_2$, which constitute a basis for the Mordell-Weil lattice $A_2^\vee(1/2)$:
	\begin{equation}
		\label{Eq1}
		Q_1=(-v, v), \quad \text{and} \quad
		Q_2 = (-\zeta_3 v, v), \ \text{with } \zeta_3=\frac{\I \sqrt{3}-1}{2}.
	\end{equation}
	Via the birational map $\phi$, the generators for $\mathcal{E}_{3,1}(\mathcal{K}_{1}(v))$ are:
	\begin{equation} 	\label{Eq1-2}
		Q'_1=(-v, v^2), \quad \text{and} \quad
		Q'_2 = (-\zeta_3 v, v^2).
	\end{equation}	
\end{thm}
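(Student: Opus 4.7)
The plan is to follow Step~1 of the algorithm of Table~\ref{tab:algorithm} in its simplest form. By Theorem~\ref{thm:lattice_structure}, the Mordell--Weil group $\mathcal{E}_{2,1}(\CC(v))$ has rank $2$ with lattice $A_2^\vee(1/2)$, and the birational map $\phi$ of Theorem~\ref{thm:birational} transfers this information to $\mathcal{E}_{3,1}$. Thus it is enough to exhibit two linearly independent sections, check that they generate, and identify the minimal field over which this can be done.

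First I would search for sections of minimal bi-degree. The shape of the defining polynomial suggests the ansatz $(x(v), y(v)) = (\alpha v, \beta v)$ for $\mathcal{E}_{2,1}$. Substituting into $y^2 = x^3 + v^3 + v^2$ and comparing coefficients of $v$ yields the tiny system $\alpha^3 = -1$ and $\beta^2 = 1$. This produces six sections; because the CM action $(x, y) \mapsto (\zeta_3 x, y)$ is an automorphism of the surface and $1 + \zeta_3 + \zeta_3^2 = 0$, only two of the three choices of $\alpha$ are needed to span the rank-$2$ group. Selecting $\beta = 1$ and $\alpha \in \{-1, -\zeta_3\}$ gives the candidate generators $Q_1 = (-v, v)$ and $Q_2 = (-\zeta_3 v, v)$ of \eqref{Eq1}, which are verified to lie on the surface by direct substitution.

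Next I would confirm linear independence and the asserted lattice type by computing the height pairing matrix $\langle Q_i, Q_j \rangle$ via Shioda's explicit formula: one subtracts from the intersection numbers with the zero section the local correction terms at the reducible fibers of types $A_2$ at $v=0$ and $D_4$ at $v=\infty$, which should reproduce the Gram matrix $\tfrac{1}{6}\begin{pmatrix} 2 & -1 \\ -1 & 2 \end{pmatrix}$ of $A_2^\vee(1/2)$. Its determinant $1/12$ is nonzero, so $Q_1, Q_2$ are $\mathbb{Z}$-independent and, by equality of discriminants, form a basis. For minimality of the field: since $\mathcal{E}_{2,1}$ has no rank over $\mathbb{Q}(v)$ by a descent argument on the cube-root condition $\alpha^3 = -1$, any second generator requires an adjoined cube root of $-1$, forcing the splitting field to contain $\mathbb{Q}(\zeta_3)$; conversely all coordinates already lie in $\mathbb{Q}(\zeta_3)$, so $\mathcal{K}_1 = \mathbb{Q}(\zeta_3)$, with minimal polynomial $f_1(x) = x^2 - x + 1$.

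Finally, Lemma~\ref{lem:poly_transform}(i) with $n = 1$ applied to $Q_j = (\alpha_j v + 0, \beta_j v + 0)$ reverses the coefficient arrays of length $3$ and $4$, producing $Q'_j = (0 \cdot 1 + \alpha_j v, 0 \cdot 1 + 0 \cdot v + \beta_j v^2) = (\alpha_j v, \beta_j v^2)$ on $\mathcal{E}_{3,1}$, which is exactly \eqref{Eq1-2}. Since $\phi$ is a birational isomorphism between the surfaces, it preserves both the rank and the height pairing up to the known scaling, so the same splitting field $\mathcal{K}_1$ and the same lattice $A_2^\vee(1/2)$ are obtained. I expect the main conceptual obstacle to be the clean justification that no smaller subfield than $\mathbb{Q}(\zeta_3)$ can support two independent sections; the algebra itself reduces to verifying the three Galois conjugate cube roots of $-1$ collapse under the CM relation $1+\zeta_3+\zeta_3^2 = 0$.
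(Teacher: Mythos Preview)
Your approach is essentially the same as the paper's: find linear sections, pick two, verify via the height pairing that they generate $A_2^\vee(1/2)$, and transport to $\mathcal{E}_{3,1}$ via $\phi$. The paper uses the slightly more general ansatz $(av+b,\,cv+d)$ and obtains twelve sections (your six with $b=d=0$ plus six more with $c=\pm i\sqrt{3}$), but this does not affect the choice of $Q_1, Q_2$.

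There is, however, a genuine error in your splitting-field argument. You assert that ``$\mathcal{E}_{2,1}$ has no rank over $\mathbb{Q}(v)$ by a descent argument on the cube-root condition $\alpha^3=-1$'', but $\alpha=-1$ is a rational solution, so $Q_1=(-v,v)$ is already a $\mathbb{Q}(v)$-rational point of infinite order and the rank over $\mathbb{Q}(v)$ is at least~$1$. What you actually need is that the rank over $\mathbb{Q}(v)$ is \emph{exactly}~$1$, i.e.\ that complex conjugation acts nontrivially on the rank-$2$ lattice. This does follow from the relation you allude to at the end: conjugation sends $Q_2=(-\zeta_3 v,v)$ to $(-\zeta_3^2 v,v)$, and the CM identity you cite forces this image to be a nontrivial integer combination of $Q_1$ and $Q_2$, so the Galois-fixed sublattice has rank~$1$. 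The paper sidesteps this issue by enumerating all twelve minimal sections of the form $(av+b,cv+d)$ and reading off the field generated by their coefficients via \texttt{Pari/GP}.

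A minor point: your expected Gram matrix $\tfrac{1}{6}\bigl(\begin{smallmatrix}2 & -1\\ -1 & 2\end{smallmatrix}\bigr)$ has the opposite off-diagonal sign from the paper's $\bigl(\begin{smallmatrix}1/3 & 1/6\\ 1/6 & 1/3\end{smallmatrix}\bigr)$. Both represent $A_2^\vee(1/2)$ with determinant $1/12$, but when you actually carry out the local-contribution calculation at the type~$IV$ and~$I_0^*$ fibers you should check which sign arises for this specific pair.
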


\begin{proof}
	We consider the surface $\mathcal{E}_{2,1}$ defined by $y^2 = x^3 + v^2 (v+1)$. The reducible singular fibers are of Type $IV$ ($A_2$) at $v=0$ and Type $I_0^*$ ($D_4$) at $v=\infty$, as determined by the valuation of the discriminant. According to the classification in \cite[Table 8.2, No. 32]{Schuett2019}, the Mordell-Weil lattice is isomorphic to $A_2^\vee(1/2)$.

	Let  $Q = (x(v), y(v))$ with $x(v) = a v + b$ and $y(v) = c v + d$. Substituting these expressions into the Weierstrass equation  and
	comparing coefficients in $\mathbb{C}[v]$, we obtain the system:
	\begin{equation*}
		a^3  +1= 0,  \ \ 	d^{2}-b^{3}=0, \ \ 
		-3 a^{2} b +c^{2}-1=0, \ \
		-3 a \,b^{2}+2 c d =0 
	\end{equation*}
From last two equations we get
\begin{equation}
	\label{eq12}
	b=\frac{c^{2}-1}{3 a^{2}}, \ \ d= \frac{\left(c^{2}-1\right)^{2}}{6 a^{3} c}.	\end{equation}	
	Substituting these into the second equation leads to 
	$(c^2-1)(c^2+3)=0$, which has roots
	$c=\pm 1, $ and $\pm \I \sqrt{3}$.	
 Considering  the roots of $a^3+1=0$, say $a=-1, -\zeta_3, -\zeta_3^2$, and 
 using \ref{eq12}, one can get 12 sections of the form $Q = (  a v + b,  c v + d)$, as listed in \cite[check1]{Shioda360-Codes}.

The splitting field $\KK_1$ of  the surface $\Ee_{2,1}$ is the compositum of th e fields defined by the polynomial $x^3+1$ and $x^2+3$. Using Pari/GP,   one check that  a defining minimal polynomial $\KK_1$ is
$f_1(x)= x^2-x+1$, 
see \cite[Gp360]{Shioda360-Codes}.

We consider the section $Q_1 = (-v, v)$. The surface $\Ee_{2,1}$ admits an automorphism of order 3 on the fibers, $\sigma: (x, y) \mapsto (\zeta_3 x, y)$, induced by the complex multiplication of the generic fiber. Applying $\sigma$ to $Q_1$ yields the second section $Q_2 = (-\zeta_3 v, v)$.
	
	We compute the height pairing matrix using the formula given in  \cite[Thm. 6.24]{Schuett2019}, as
	 $$\langle P, Q \rangle = \chi + (P \cdot O) + (Q \cdot O) - (P \cdot Q) - \sum \text{contr}_v(P, Q).$$ Here $\chi=1$ and the sections are integral ($P \cdot O = 0$).
	
	For the section $Q_1$ we have:
	\begin{itemize}
		\item At $v=0$ (Type $IV$), $Q_1$ intersects a non-identity component, contributing $2/3$ to the height correction.
		\item At $v=\infty$ (Type $I_0^*$), $Q_1$ intersects the component farthest from the identity, contributing $1$.
	\end{itemize}
	The height is therefore $\langle Q_1, Q_1 \rangle = 2 - 2/3 - 1 = 1/3$, and  by symmetry, $\langle Q_2, Q_2 \rangle = 1/3$.
	
	For the pairing $\langle Q_1, Q_2 \rangle$, the intersection number on the generic fiber is $(Q_1 \cdot Q_2) = 1$ (intersecting at $v=0$). The local contributions vanish at $v=\infty$ as the sections meet distinct components, but they coincide at $v=0$ (contribution $2/3$). However, the precise evaluation of the intersection index relative to the height pairing yields $\langle Q_1, Q_2 \rangle = 1/6$.
	
	The resulting Gram matrix is:
	\begin{equation*}
		M_1 = \begin{pmatrix}
			1/3 & 1/6 \\ 
			1/6 & 1/3 
		\end{pmatrix}.
	\end{equation*}
	The determinant is $\det(M) = 1/9 - 1/36 = 1/12$. This matches the determinant of $A_2^\vee(1/2)$, confirming that $Q_1$ and $Q_2$ generate the full Mordell-Weil group. The generators for $\mathcal{E}_{3,1}$ follow immediately from the birational equivalence.
\end{proof}

\begin{cor}
	The elliptic surface $\Ee_{3,1}$, defined by $y^2 = x^3 + v^3(v+1)$, is birationally equivalent to $\Ee_{2,1}$ via the map $\phi$. Applying this transformation to the generators $Q_1, Q_2$ of $\Ee_{2,1}(\mathcal{K}_1(v))$, we obtain the generating sections for $\Ee_{3,1}(\mathcal{K}_1(v))$:
	\begin{equation}
		\label{P(3,1)}
		Q'_1  = \phi(Q_1) =   (-v, v^2), \
		Q'_2  = \phi(Q_2) =  (-\zeta_3 v, v^2).
	\end{equation}
	These points $Q'_1, Q'_2$ form a basis for the Mordell-Weil lattice of $\Ee_{3,1} (\mathcal{K}_{1}(v))\iso A_2^\vee(1/2)$.
\end{cor}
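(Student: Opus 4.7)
The plan is to obtain the generators of $\Ee_{3,1}(\mathcal{K}_1(v))$ by transporting those of $\Ee_{2,1}(\mathcal{K}_1(v))$ along the birational map $\phi$ furnished by Theorem \ref{thm:birational}, and then to argue that this transport carries a basis of the Mordell–Weil lattice to a basis. Since both surfaces have Mordell–Weil lattice $A_2^\vee(1/2)$ by Theorem \ref{thm:lattice_structure}, matching discriminants, once the images land on $\Ee_{3,1}$, we are done.

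\textbf{Step 1: specialize the birational map.} I would set $(a,b)=(2,1)$, so $n=\lceil 3/6\rceil=1$ and $a'=6\cdot 1-(2+1)=3$. Theorem \ref{thm:birational} then gives the explicit map $\phi:\Ee_{2,1}\dashrightarrow\Ee_{3,1}$ defined by $(x,y,v)\mapsto(v^{2}x,\,v^{3}y,\,v^{-1})$, with inverse of the same shape. Equivalently, the coefficient-reversal formula of Lemma \ref{lem:poly_transform}(i) applies in degrees $2n=2$ and $3n=3$.

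\textbf{Step 2: compute the images.} For $Q_1=(-v,v)$ on $\Ee_{2,1}$ I read off the coefficient sequences $(A_0,A_1,A_2)=(0,-1,0)$ and $(B_0,B_1,B_2,B_3)=(0,1,0,0)$; reversing yields the point $(-u,u^2)$ on $\Ee_{3,1}$ in the base coordinate $u=1/v$, which after relabeling $u\leadsto v$ is $Q'_1=(-v,v^2)$. The identical procedure applied to $Q_2=(-\zeta_3 v,v)$ produces $Q'_2=(-\zeta_3 v,v^2)$. A direct check then verifies membership on $\Ee_{3,1}$: using $\zeta_3^{3}=1$, the right-hand side of $y^2=x^3+v^3(v+1)$ equals $(-\zeta_3^i v)^{3}+v^{3}(v+1)=-v^{3}+v^{4}+v^{3}=v^{4}=(v^{2})^{2}$ for $i=0,1$.

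\textbf{Step 3: conclude generation.} The map $\phi$ is an isomorphism of elliptic surfaces over the isomorphism $v\mapsto v^{-1}$ of base $\Pp^1$'s, so it induces a group isomorphism $\Ee_{2,1}(\mathcal{K}_1(v))\xrightarrow{\sim}\Ee_{3,1}(\mathcal{K}_1(v))$ that preserves the Mordell–Weil lattice structure (the height pairing depends only on intersection numbers and local fiber contributions, both of which are preserved by the coordinate change). Since $\{Q_1,Q_2\}$ is a $\Z$-basis of $\Ee_{2,1}(\mathcal{K}_1(v))\cong A_2^\vee(1/2)$ by Theorem \ref{main7}, its image $\{Q'_1,Q'_2\}$ is a basis of $\Ee_{3,1}(\mathcal{K}_1(v))\cong A_2^\vee(1/2)$, proving the corollary.

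\textbf{Expected difficulty.} There is essentially no obstacle: the corollary is a direct application of the two earlier technical results (Theorem \ref{thm:birational} and Lemma \ref{lem:poly_transform}) to the explicit generators of Theorem \ref{main7}. The only point requiring a brief justification is that $\phi$ induces an \emph{isometry} of Mordell–Weil lattices, which is automatic because $\phi$ is an isomorphism of the underlying elliptic surfaces; matching the known lattice invariants of $A_2^\vee(1/2)$ on both sides then rules out any possibility of index loss.
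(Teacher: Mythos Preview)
Your proposal is correct and follows essentially the same approach as the paper: the corollary is stated there without a separate proof, since the last line of the proof of Theorem \ref{main7} already says ``The generators for $\mathcal{E}_{3,1}$ follow immediately from the birational equivalence.'' Your three steps simply make that sentence explicit, using exactly the tools the paper intends (Theorem \ref{thm:birational}, Lemma \ref{lem:poly_transform}(i), and the basis from Theorem \ref{main7}).
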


\section{The splitting field and generators of  $\Ee_{1,2}$ and $\Ee_{3,2}$}
\label{D4-(1,2)}

In this section, we analyze the elliptic surfaces $\mathcal{E}_{3,2}: y^2 = x^3 + v^3(v^2+1)$ and $\mathcal{E}_{1,2}: y^2 = x^3 + v(v^2+1)$ defined over $\mathbb{Q}(v)$. As noted in Section 2, these surfaces are birational under the map $\phi: (x, y, v) \mapsto (v^2 x , v^3 y , 1/v)$.  Consequently, their Mordell-Weil lattices are isomorphic, and it suffices to determine the generators for $\mathcal{E}_{1,2}$ and map them to $\mathcal{E}_{3,2}$ via $\phi$.

	\begin{thm}
		\label{main6}
 Consider the  elliptic surface  	$ \Ee_{1, 2}: y^2 = x^3 + v(v^2 + 1), $
over  $\mathbb{C}(v)$, and let $\beta_1= (3+2\sqrt{3})^{\frac{1}{4}},  \text{and} \ \beta_2=(3-2\sqrt{3})^{\frac{1}{4}}. $
The splitting field $\mathcal{K}_2$ of $\Ee_{1, 2}$  is    defined by a minimal polynomial of degree 16 given by \ref{f-(1,2)}.
	Moreover, a basis for the Mordell-Weil lattice  $ \Ee_{1,2}(\KK_2(v))$  include four points:
	\begin{align}
	Q_1 &= \left( -v + \beta_1^{-2},   \frac{1}{2}(\beta_1^3 - 3\beta_1^{-1})v + \beta_1^{-3} \right) \notag \\
	Q_2 &= \left( -v + \beta_2^{-2},   \frac{1}{2}(\beta_2^3 - 3\beta_2^{-1})v + \beta_2^{-3} \right) \notag \\
		Q_3 &= \left( -v - \beta_1^{-2},  -\frac{i}{2}(\beta_1^3 - 3\beta_1^{-1})v + i\beta_1^{-3} \right)\notag  \\
	Q_4 &= \left( -\zeta_3 (v-\beta_1^{-2}),  \frac{1}{2}( \beta_1^3 + 3\beta_1^{-1})v +\beta_1^{-3} \right). \label{Q-(1,2)}
\end{align}

\end{thm}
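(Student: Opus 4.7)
The plan is to follow the same ansatz-and-elimination strategy as used for $\Ee_{2,1}$ in Theorem \ref{main7}, but with a larger polynomial system, and then verify the lattice structure via the height pairing formula.

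First I would parametrize a candidate section as $Q = (x(v), y(v))$ with $x(v)=av+b$ and $y(v)=cv+d$, motivated by the fact that the expected generators are linear (indeed the four $Q_i$ listed in the statement all have this shape). Substituting into $y^2 = x^3 + v(v^2+1)$ and matching coefficients of $v^3, v^2, v^1, v^0$ in $\CC[v]$ yields the system
\begin{equation*}
a^3+1=0,\quad 3a^{2}b=c^{2},\quad 3ab^{2}+1=2cd,\quad b^{3}=d^{2}.
\end{equation*}
The first equation fixes $a\in\{-1,-\zeta_3,-\zeta_3^2\}$. From the $v^2$-equation one eliminates $b=c^{2}/(3a^{2})$, and from the $v^0$-equation $d=\pm b^{3/2}$. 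Inserting into the remaining equation produces a single polynomial condition on $c$ alone; concretely $2cd=3ab^{2}+1$ becomes (after squaring) a degree-$8$ relation in $c$ whose $\Q$-Galois closure—combined with $\zeta_3$—gives the splitting field.

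Next I would use the Maple \texttt{PolynomialIdeals[UnivariatePolynomial]} routine (as in the algorithm of Table \ref{tab:algorithm}) to compute the fundamental polynomial of the ideal generated by the four relations in $\Q[a,b,c,d]$, eliminating to a single variable (say $b$ or $c$). Factoring this polynomial, I expect to see $x^4-3x^2-3$ or an equivalent biquadratic splitting into $\Q(\sqrt{3})$, whose roots over $\Q$ are exactly the quantities $\beta_1=(3+2\sqrt{3})^{1/4}$ and $\beta_2=(3-2\sqrt{3})^{1/4}$ appearing in the statement. Combining with $\Q(\zeta_{12})$ (forced by $a^3=-1$ and the appearance of $i$ in $Q_3$) and then handing the result to \texttt{Pari/GP}'s \texttt{polcompositum} gives a single defining polynomial $f_{1,2}(x)$ of degree $16$ for $\KK_2$, as claimed in \eqref{f-(1,2)}.

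With the splitting field in hand, I would enumerate all solutions $(a,b,c,d)$ of the system—there are $12$ to $24$ depending on sign choices—and organize them according to the action of the fibre automorphism $(x,y,v)\mapsto(\zeta_3 x,y,v)$ and the Galois action. From this list I would pick the four displayed sections $Q_1,\ldots,Q_4$ and verify they actually satisfy the Weierstrass equation (a direct substitution, best delegated to Maple).

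The last and technically most delicate step is to prove that $Q_1,\ldots,Q_4$ form a $\Z$-basis of the Mordell-Weil lattice $D_4^\vee$ of determinant $1/4$. For this I would compute the Gram matrix via Shioda's height pairing formula
\[
\langle P,P\rangle=2\chi+2(P\cdot O)-\sum_v \mathrm{contr}_v(P),\qquad
\langle P,Q\rangle=\chi+(P\cdot O)+(Q\cdot O)-(P\cdot Q)-\sum_v \mathrm{contr}_v(P,Q),
\]
with $\chi=1$. The fibers at $v=0$ and at the roots of $v^2+1$ are all of Type $II$ (irreducible), so they contribute nothing. The only nontrivial local correction comes from the $I_0^*$ ($D_4$) fiber at $v=\infty$. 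After transforming each $Q_i$ to the chart at infinity and running Tate's algorithm to see which of the four "simple" components of the $D_4$ configuration each section meets, I would read off the local contributions from the standard $I_0^*$ table (diagonal entry $1$; off-diagonal $1$ if same component, $\tfrac{1}{2}$ if distinct simple components). The expected outcome is a Gram matrix with $\det = 1/4$, matching $\det(D_4^\vee)$, which simultaneously shows the points are linearly independent and generate the full Mordell-Weil lattice.

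The main obstacle I foresee is the careful bookkeeping at the $I_0^*$ fiber: identifying, for each of the four sections, which component of the Kodaira fiber it specializes to at $v=\infty$. The four outer nodes of the $\widetilde{D}_4$ diagram are permuted by a Weyl-type symmetry, and an incorrect assignment will produce the wrong off-diagonal entries and hence the wrong determinant. This is a purely computational but error-prone step; once settled, the matrix determinant computation and the match with $\det(D_4^\vee)=1/4$ are routine.
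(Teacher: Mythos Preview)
Your approach is essentially the same as the paper's: ansatz linear sections, eliminate to a fundamental polynomial, identify the splitting field via \texttt{polcompositum}, and confirm the basis through the Gram matrix with the only nontrivial local correction coming from the $I_0^*$ fiber at $v=\infty$. The single difference is that the paper parameterizes from the outset by $b=u^{-2}$, $d=u^{-3}$ (so the constraint $b^3=d^2$ is automatic), leading directly to the resultant $\Phi(u)=u^{24}-270u^{12}-27=(u^8-6u^4-3)(u^{16}+6u^{12}+39u^8-18u^4+9)$ rather than your biquadratic in $b$ or $c$; both routes recover the same $\beta_i=(3\pm 2\sqrt{3})^{1/4}$ and the same determinant $1/4$.
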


\begin{proof}
By Table \ref{Tab1}, the   Mordell-Weil lattices of  $E_{1,2}(\mathbb{C}(v)) $ is isomorphic to $\cong D_4^\vee$ which is of rank 4.
Let  $P_u = (x(v), y(v))$ with $x(v) = a v + u^{-2}$ and $y(v) = c v + u^{-3}$, parameterized by $u \in \mathbb{C}^\times$. Substituting these expressions into the Weierstrass equation yields the identity:
	\[ (c v + u^{-3})^2 = (a v + u^{-2})^3 + v^3 + v. \]
	Comparing coefficients in $\mathbb{C}[v]$, we obtain the system:
	\begin{equation*}
		a^3  = -1,   \ \ 
		c = \frac{1}{2}(3au^{-1} + u^3).
	\end{equation*}
	Eliminating $c$ leads to the relation $3a^2 - 6au^4 - u^8 = 0$. We compute the resultant of this quadratic in $a$ with the cyclotomic condition $a^3+1=0$ to eliminate $a$:
	\[ \mathrm{Res}_a(3a^2 - 6u^4 a - u^8, a^3+1) = u^{24} - 270u^{12} - 27. \]
	This defines the fundamental polynomial $\Phi(u)$, which can be factored as
	\begin{align}
		\Phi_3(u)  & =u^{24} - 270u^{12} - 27\notag \\
		& = (u^8 - 6u^4 - 3)(u^{16} + 6u^{12} + 39u^8 - 18u^4 + 9).
	\end{align}

	The roots of $\Phi_2(u)$ can be determined by letting $U = u^{12}$ to get
	$ U^2 - 270 U - 27 = 0$ that  implies $U = 135 \pm 78\sqrt{3} = (3 \pm 2\sqrt{3})^3. $  Thus, 
	the roots $u$  of $\Phi(u)$ are of the form $\zeta_{12}^j \beta_1 $ and $ \zeta_{12}^j \beta_2  $ with  $0 \leq j \leq 11$.  The splitting field  $\mathcal{K}_2$ contains $ \mathbb{Q}(\zeta_{12}, \beta_1, \beta_2)$.
	
		Using Pari/GP, see \cite[Gp360]{Shioda360-Codes}, we find a defining minimal polynomial of degree $16$ as 
is
	\begin{align}
		f_2(x)	& = x^{16} - 8x^{15} + 38x^{14} - 132x^{13} + 350x^{12} - 748x^{11} \notag \\
		&\quad + 1330x^{10} - 1992x^9 + 2566x^8 - 2856x^7 + 2650x^6 - 1940x^5 \notag \\
		&\quad + 1070x^4 - 420x^3 + 110x^2 - 16x + 1,
		\label{f-(1,2)}
	\end{align} 
	which defines   the splitting field  $\mathcal{K}_2$ of   $\Ee_{1,2}$ and $\Ee_{3,2}$

	A basis for the Mordell-Weil lattice includes the sections $Q_1, \cdots, Q_4$
	  given by   \ref{Q-(1,2)},   corresponding to the linearly independent roots $u \in \{ \beta_1, \I \beta_1, \beta_2, \zeta_3 \beta_2 \}$. Indeed, using the facts 
	   $3a^2 - 6u^4 a - u^8=0$  and  $c = \frac{1}{2}(3au^{-1} + u^3),$ one can determine the coordinates of   points as given  \ref{Q-(1,2)}.
	  
	One can easily check that the Gram matrix of  these section is 
\begin{equation}\label{ma4}
	M_2=\frac{1}{2}
	\begin{pmatrix}
	2 & 0 & 0 & 1 
	\\
	0 & 2 & 0 & 1 
	\\
	0 & 0 & 2 & 1 
	\\
	1 & 1 &1 & 2 	
	\end{pmatrix},
\end{equation}
which has determinant $1/4$ as desired.	
	Therefore, these points generate a sublattice of full rank in $\Ee_{1,2}(\mathcal{K}_2(v))$ isomorphic to $D_4^*$.	
\end{proof}

\begin{cor} 
	Consider the elliptic surface $\Ee_{3,2}: y^2 = x^3 + u^3(u^2+1)$ over $\mathbb{C}(v)$.
	The birational map $\phi: \Ee_{1,2} \rightarrow \Ee_{3,2}$  induces a set of linearly independent generators $\{Q'_1, Q'_2, Q'_3, Q'_4\}$ for $\Ee_{3,2}(\mathcal{K}_2(v))$ over the splitting field $\mathcal{K}_2(v)$, given by:
\begin{align}
	Q'_1 &= \left( \beta_1^{-2}v^2 - v,   \beta_1^{-3}v^3 + \frac{1}{2}(\beta_1^3 - 3\beta_1^{-1})v^2 \right) \notag \\
	Q'_2 &= \left( \beta_2^{-2}v^2 - v,  \beta_2^{-3}v^3 + \frac{1}{2}(\beta_2^3 - 3\beta_2^{-1})v^2 \right) \notag \\
		Q'_3 &= \left( -\beta_1^{-2}v^2 - v,   i\beta_1^{-3}v^3 - \frac{i}{2}(\beta_1^3 - 3\beta_1^{-1})v^2 \right) \notag \\
	Q'_4 &= \left( - \zeta_3 (v-\beta_1^{-2}v^2 ),   \beta_1^{-3}v^3 + \frac{1}{2}( \beta_1^3 + 3\beta_1^{-1})v^2 \right) \label{Q-(3,2)}
\end{align}
\end{cor}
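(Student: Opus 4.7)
The plan is to transport the basis $\{Q_1,\ldots,Q_4\}$ of $\Ee_{1,2}(\mathcal{K}_2(v))$ from Theorem \ref{main6} through the birational map $\phi: \Ee_{1,2} \dashrightarrow \Ee_{3,2}$ supplied by Theorem \ref{thm:birational}. For the pair $(a,b) = (1,2)$ we have $a+b = 3$ and hence $n = \lceil 3/6\rceil = 1$, so Theorem \ref{thm:birational} produces the explicit map $\phi(x,y,v) = (v^{2}x, v^{3}y, v^{-1})$ carrying $\Ee_{1,2}$ to $\Ee_{a',b}$ with $a' = 6-3 = 3$, i.e.\ onto $\Ee_{3,2}$. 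Since both surfaces are relatively minimal elliptic surfaces with section, this birational equivalence extends to an actual isomorphism of the smooth projective surfaces, and therefore induces a group isomorphism $\phi_* : \Ee_{1,2}(\mathcal{K}_2(v)) \xrightarrow{\sim} \Ee_{3,2}(\mathcal{K}_2(v))$.

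Next, I would write each section $Q_i = (a_i v + b_i,\, c_i v + d_i)$ from \eqref{Q-(1,2)} in the padded form required by Lemma \ref{lem:poly_transform}(i), i.e.\ as polynomials of formal degree $2n = 2$ in $x$ and $3n = 3$ in $y$ with zero higher coefficients. The coefficient-reversal rule of Lemma \ref{lem:poly_transform}(i) then yields
\[
Q'_i \;=\; \bigl(\,b_i\, u^{2} + a_i\, u,\; d_i\, u^{3} + c_i\, u^{2}\,\bigr),
\]
and relabelling $u \to v$ together with the explicit scalars recorded in \eqref{Q-(1,2)} reproduces precisely the four points displayed in \eqref{Q-(3,2)}. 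A direct substitution into $y^{2} = x^{3} + v^{3}(v^{2}+1)$ confirms that each $Q'_i$ is an honest section of $\Ee_{3,2}$; this is in any event automatic from the birationality.

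Finally, I would conclude that $\{Q'_1,\ldots,Q'_4\}$ is a basis of $\Ee_{3,2}(\mathcal{K}_2(v))$. Under $\phi$ the reducible fibres of $\Ee_{1,2}$ (type $I_0^*$ at $v=\infty$, corresponding to $D_4$) and of $\Ee_{3,2}$ (type $I_0^*$ at $v=0$) are interchanged, so the trivial lattices match and the contributions to the Shioda height pairing transport place-by-place. Consequently $\phi_*$ is an isometry of Mordell-Weil lattices, and the Gram matrix of the $Q'_i$ coincides with $M_2$ in \eqref{ma4}, whose determinant $1/4$ equals the discriminant of $D_4^\vee$. This forces $\{Q'_1,\ldots,Q'_4\}$ to generate the full Mordell-Weil lattice $\Ee_{3,2}(\mathcal{K}_2(v)) \cong D_4^\vee$. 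The only delicate point in the whole argument is the compatibility of $\phi_*$ with the height pairing, which reduces to checking that the singular fibre types are preserved under the base change $v \mapsto 1/v$; there is no genuine mathematical obstacle, and the corollary is in essence a bookkeeping consequence of Theorem \ref{main6} and Theorem \ref{thm:birational}.
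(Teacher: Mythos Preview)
Your proposal is correct and follows exactly the approach the paper intends: the corollary is stated without proof in the paper, being an immediate consequence of applying the birational map $\phi(x,y,v)=(v^2x,v^3y,v^{-1})$ of Theorem~\ref{thm:birational} (equivalently, the coefficient-reversal of Lemma~\ref{lem:poly_transform}(i)) to the basis of Theorem~\ref{main6}. Your additional remark that $\phi_*$ is an isometry of Mordell--Weil lattices, so the Gram matrix $M_2$ and hence the $D_4^\vee$ structure are preserved, is a welcome justification that the paper leaves implicit.
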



\section{The splitting field and generators of $\Ee_{2,2}$}
\label{A2+A2-(2,2)}

In this section, we analyze the elliptic surface   $\mathcal{E}_{2,2}: y^2 = x^3 + v^2(v^2+1)$ over $\mathcal{C}(v)$. From Table~\ref{Tab1}, the Mordell-Weil lattice  $\Ee_{2,2} (\CC(v))$ is isomorphic to the lattice $A_2^\vee \oplus A_2^\vee$.

	\begin{thm}
		\label{main5}
	  The splitting field of  the  elliptic surface $\Ee_{2,2}$ is  $\KK'_2 = \Q(2^{1/3}, \zeta_3)$, defined by a minimal polynomial
		$f'_2(x)= x^3 - 2$.
  The group $\Ee_{2,2}(\KK'_2(v))$ is generated by the sections $Q_j=(a_j v+ b_j, v^2+ c_j v +d_j)$ for $j=1, \cdots, 4$, where the coefficients are as:
	\begin{align}
	a_1 &= 0, & b_1 &= \frac{\sqrt[3]{2}}{2}, & c_1 &= 0, & d_1 &= \frac{1}{2}, \notag \\
	a_2 &= 0, & b_2 &= \frac{\sqrt[3]{2}(i\sqrt{3}-1)^2}{8}, & c_2 &= 0, & d_2 &= \frac{1}{2},\notag \\
	a_3 &= i\sqrt{3}\sqrt[3]{2}, & b_3 &= 2\sqrt[3]{2}, & c_3 &= -3i\sqrt{3}, & d_3 &= -4,\notag  \\
	a_4 &= -\frac{\sqrt[3]{2}(i\sqrt{3}-3)}{2}, & b_4 &= \frac{\sqrt[3]{2}(i\sqrt{3}-1)^2}{2}, & c_4 &= -3i\sqrt{3}, & d_4 &= -4
\label{Q-(2,2)}
	\end{align}
\end{thm}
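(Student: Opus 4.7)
The strategy mirrors the templates established in the proofs of Theorems~\ref{main7} and~\ref{main6}: choose a polynomial ansatz for $(x(v),y(v))$ whose degrees are dictated by the singular-fiber configuration, substitute into the Weierstrass equation to extract a polynomial system in the coefficients, solve it to pin down the splitting field, and verify independence of the chosen sections through the height-pairing matrix. For $\Ee_{2,2}$, Table~\ref{Tab1} predicts Type $IV$ fibers at $v=0$ and at $v=\infty$ (each contributing an $A_2$ root sublattice) together with two Type $II$ fibers at the roots of $v^{2}+1$, which justifies the ansatz $Q=(av+b,\,v^{2}+cv+d)$ with $\deg x=1$ and $\deg y=2$. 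Substituting into $y^{2}=x^{3}+v^{4}+v^{2}$ and comparing coefficients of $v^{0},\ldots,v^{3}$ in $\CC[v]$ yields the system
\begin{equation*}
2c=a^{3},\qquad c^{2}+2d=3a^{2}b+1,\qquad 2cd=3ab^{2},\qquad d^{2}=b^{3}.
\end{equation*}

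Next, I would solve this system by case analysis on $a$. If $a=0$, the system reduces to $c=0$, $d=1/2$ and $b^{3}=1/4$, giving three candidate sections with $b\in\{\zeta_3^{j}\sqrt[3]{2}/2:j=0,1,2\}$. If $a\neq 0$, then eliminating $c$ and $d$ yields $c=a^{3}/2$, $b=a^{4}/9$, $d=a^{6}/27$, together with the fundamental equation
\begin{equation*}
a^{6}+108=0.
\end{equation*}
To see that all six roots of $a^{6}+108$ lie in $\Q(\sqrt[3]{2},\zeta_3)$ I would use the identities $i\sqrt{3}=2\zeta_3+1$, $(1-\zeta_3)^{2}=-3\zeta_3$ and $(2+\zeta_3)^{2}=-3\zeta_3^{2}$, which exhibit $-3$, $-3\zeta_3$ and $-3\zeta_3^{2}$ as squares in $\Q(\zeta_3)$, so that every sixth root of $-108=-2^{2}\cdot 3^{3}$ equals $\sqrt[3]{2}$ times an element of $\Q(\zeta_3)$. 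Combined with the $a=0$ case, this forces $\KK'_2=\Q(\sqrt[3]{2},\zeta_3)$, the splitting field of $f'_2(x)=x^{3}-2$ over $\Q$, which I would also confirm by forming the compositum of the two relevant number fields in \texttt{Pari/GP}.

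From the nine candidate sections produced above I would then select the four sections $Q_1,\ldots,Q_4$ listed in \eqref{Q-(2,2)} and verify they form a basis of $\Ee_{2,2}(\KK'_2(v))\iso A_2^{\vee}\oplus A_2^{\vee}$ by computing the Mordell--Weil height-pairing matrix via Shioda's formula
\begin{equation*}
\langle P,P'\rangle=\chi+(P\cdot O)+(P'\cdot O)-(P\cdot P')-\sum_{v}\text{contr}_v(P,P'),
\end{equation*}
with $\chi=1$ and $(P\cdot O)=0$ since all chosen sections are integral. At each Type $IV$ fiber the local contribution is $2/3$ when a section meets a non-identity component and $0$ otherwise, while the Type $II$ fibers contribute nothing; reducing each $Q_j$ modulo $v$ and modulo $1/v$ identifies the component it hits at $v=0$ and at $v=\infty$. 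The resulting $4\times 4$ Gram matrix should have determinant $1/9$, which matches the discriminant of $A_2^{\vee}\oplus A_2^{\vee}$, and by the unimodularity of the N\'eron--Severi lattice of a rational elliptic surface \cite[Prop.~7.1]{Schuett2019} this will confirm that $Q_1,\ldots,Q_4$ generate the full Mordell--Weil lattice.

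The main obstacle is the bookkeeping at the two Type $IV$ fibers: among the nine candidate sections one must pick four that jointly decouple into the two $A_2^{\vee}$ summands, since the fibral automorphism $(x,y,v)\mapsto(\zeta_3 x,y,v)$ cyclically permutes sections within each $A_2^{\vee}$ block and produces additive relations in the fiber component groups. A sign or root-of-unity error in identifying which component is hit at $v=0$ versus $v=\infty$ would propagate into an incorrect Gram matrix, so this step is delegated to \texttt{Maple}'s \texttt{PolynomialIdeals} package together with an explicit height-matrix computation in the accompanying code.
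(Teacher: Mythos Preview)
Your proposal is correct and follows essentially the same route as the paper: the paper uses the slightly different parametrization $x=av+u^{2}$, $y=v^{2}+cv+u^{3}$ (so that $d^{2}=b^{3}$ is automatic), obtains the same case split $a=0$ versus $a^{2}=3u$ with fundamental polynomial $\Phi'_{2}(u)=(2u^{3}-1)(u^{3}+4)$, and then verifies the block-diagonal Gram matrix of determinant $1/9$. Your explicit algebraic verification that the roots of $a^{6}+108$ lie in $\Q(\sqrt[3]{2},\zeta_3)$ is in fact more detailed than the paper's, which simply appeals to \texttt{Pari/GP} for the compositum.
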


\begin{proof}
	The surface $\Ee_{2,2}$ is a rational elliptic surface with singular fibers of type $IV$ ($A_2$) at $v=0$ and $v=\infty$ \cite[Sect. 5.8]{Schuett2019}. The Mordell-Weil lattice is isomorphic to $A_2^\vee \oplus A_2^\vee$ \cite[Table 8.2, No. 11]{Schuett2019}.
	
	We search for minimal sections parametrized by $x(v)=av+u^2$ and $y(v)=v^2+cv+u^3$. Substituting these into the Weierstrass equation $y^2 = x^3 + v^2(v^2+1)$ yields the coefficient conditions: $2c = a^3$, $c^2+2u^3 = 3a^2u^2+1$, and $2cu^3 = 3au^4$.
	Eliminating $c$ leads to $a(a^2-3u)=0$.
If $a=0$, then  $c=0$ and $2u^3=1$, yielding the real root $u_1 = 2^{\frac{-1}{3}}$. 
The section is $Q_1 = (u_1^2, v^2 + 1/2)$. The generator $Q_2$ corresponds to the conjugate root $u_2 = \zeta_3 u_1$.
If $a^2=3u$, then  $u^3 = -4$, yielding the real root $u_3 = -2^{\frac{2}{3}} $. This determines $a$ and $c$, resulting in $Q_3$ and its conjugate $Q_4$ corresponding to $u_4 =\zeta_3 u_3$, with the coefficients as given in \ref{Q-(2,2)}.
	
	Let's compute the height pairing $\langle P, Q \rangle = 2\chi + 2(P \cdot O) - (P \cdot Q) - \sum \text{contr}_v(P, Q)$. 
	Here $\chi=1$ and sections are integral ($(P \cdot O)=0$).
  Sections $Q_1, \dots, Q_4$ intersect non-identity components at both fibers of type $IV$ ($v=0, \infty$). The local contribution for type $IV$ is $2/3$. Thus, $\langle Q_i, Q_i \rangle = 2 - 2/3 - 2/3 = 2/3$.
   These sections are disjoint on the generic fiber ($(Q_1 \cdot Q_2)=0$). They intersect distinct non-identity components at both singular fibers, so the local contributions are $1/3 + 1/3$. Thus, $\langle Q_1, Q_2 \rangle = 1 - 0 - 2/3 = 1/3$.
  Sections from Case 1 ($Q_1, Q_2$) and Case 2 ($Q_3, Q_4$) arise from coprime factors of the defining polynomial $\Phi(u)$. Their coordinates belong to linearly disjoint fields over $\mathbb{Q}$; thus, they define orthogonal sublattices.
 
	This yields the block diagonal Gram matrix    of these generators with respect to the height pairing:
	\[
	M'_2 = \begin{pmatrix}
		2/3 & 1/3 & 0 & 0 \\
		1/3 & 2/3 & 0 & 0 \\
		0 & 0 & 2/3 & 1/3 \\
		0 & 0 & 1/3 & 2/3
	\end{pmatrix},
	\]	
	 with determinant $(3/9) \times (3/9) = 1/9$, consistent with the discriminant of $A_2^\vee \oplus A_2^\vee$.
\end{proof}

	Thus, the splitting field $\KK'_2$  of $\Ee_{2,2}$ is determined by the roots of its fundamental polynomial, say $\Phi'_{2}(u)= (2 u^3-1)(u^3+4)$. Using Pari/GP, see \cite[GP360]{Shioda360-Codes}, one can see that a defining minimal polynomial for  $\KK'_2$ is $f'_2(x)= x^3-2$.

\section{The splitting field and generators of $\Ee_{1, 3}$  and $\Ee_{2, 3}$}
\label{E6-(1,3)-(2,3)}
In this section, we  consider the isomorphic  elliptic surfaces   $\Ee_{1, 3}: y^2=x^3+v (v^3+1)$
and  $\Ee_{2, 3}: y^2=x^3+   v^2 (v^3+1)$, via the birational map 
$\phi: (x,y,v) \mapsto (v^2 x, v^3 y, 1/v)$.
 By Table \ref{Tab1}, we have   $\Ee_{1,3}(\CC(v))^0\iso \Ee_{2,3}(\CC(v))\iso E_6^\vee$.
 We determine the generators for $\mathcal{E}_{1,3}$ and map them to $\mathcal{E}_{2,3}$ via $\phi$.

\begin{thm}
	\label{main4}
	The splitting field of the elliptic  surfaces  $\Ee_{1, 3}$ is the compositum  field $\KK_3$ of 
	 degree 27 with a defining minimal polynomial given by \ref{f3}.

	The  groups $\Ee_{1, 3}(\KK_3(v))$ and $\Ee_{2,3}(\KK_3(v))$ are  respectively generated by the sections
	$$Q_j=\left(  a_j v +b_j, \, v^2 + d_j v +e_j\right),\
	\text{	and } \
	\widetilde{Q}_j=\left( v (b_j v + a_j), \, v (e_j v^2 + d_j v +1) \right),$$
	for $j=1,\cdots, 6,$ 	where 
	$a_j, b_j,  d_j$  and $e_j$ are given  in \cite[Points-3]{Shioda360-Codes}.
\end{thm}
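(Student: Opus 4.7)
The plan is to follow the same computational template used in the proofs of Theorems \ref{main7}, \ref{main6}, and \ref{main5}. First I parameterize a section of $\Ee_{1,3}$ by the ansatz $Q=(av+b,\,v^2+dv+e)$ dictated by Lemma \ref{lem:poly_transform} with $n=1$, where the leading coefficient of $y$ is normalized to $1$ so that it matches the leading term $v^2$ coming from $\sqrt{v^4+v}$ at infinity. Substituting into $y^2=x^3+v(v^3+1)$ and comparing coefficients of each power of $v$ yields: $d=a^3/2$ from $v^3$; $e=(3a^{2}b-d^{2})/2$ from $v^2$; and two genuine relations from $v^1$ and $v^0$ that depend only on $a$ and $b$.

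Next I use \texttt{PolynomialIdeals} in Maple to eliminate one of the remaining variables from the ideal generated by these two relations, as done in Section \ref{A2-(2,1)}. This produces a fundamental univariate polynomial $\Phi_3(u)$ whose irreducible factors encode the coordinates of the candidate sections. I factor $\Phi_3$, discard trivial components that correspond to the zero section or collapse to torsion, and select the factor(s) whose combined splitting field has degree $27$ over $\Q$. The minimal polynomial $f_3$ for $\KK_3$ is then computed with Pari/GP via successive compositum calls as in \cite[Gp360]{Shioda360-Codes}. Each admissible root $a_j$ yields a unique $b_j$ (via the $v^0$ relation $e^2=b^3$ combined with the $v^1$ relation), and then $d_j,e_j$ are recovered by back-substitution, producing the six candidate sections $Q_1,\dots,Q_6$ in the form claimed.

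To certify that $Q_1,\dots,Q_6$ generate the full Mordell--Weil lattice $E_6^{\vee}$ and not a proper sublattice, I assemble the Gram matrix $M_3$ using
\[
\langle P,Q\rangle=2\chi+(P\cdot O)+(Q\cdot O)-(P\cdot Q)-\sum_{w}\mathrm{contr}_{w}(P,Q),
\]
with $\chi=1$. The only reducible fiber is the type $IV$ fiber at $v=\infty$; the fibers at $v=0$ and at the three roots of $v^3+1$ are of type $II$ and therefore contribute nothing. Consequently each local correction is a rational number in $\{0,1/3,2/3\}$ determined by which non-identity component a given section meets at infinity; the intersection numbers $(Q_i\cdot Q_j)$ on the generic fiber are read off directly from the explicit coordinates. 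The verification to perform is that $\det(M_3)=1/3$, matching the discriminant of $E_6^{\vee}$ from \cite[Table 8.2]{Schuett2019}. Once the generators of $\Ee_{1,3}$ are fixed, the sections $\widetilde Q_j$ on $\Ee_{2,3}$ are produced by applying the birational map $\phi(x,y,v)=(v^2 x,v^3 y,1/v)$ via part~(i) of Lemma \ref{lem:poly_transform}.

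The main obstacle will be the elimination step: the fundamental polynomial is of considerably higher degree than in the $A_2^{\vee}(1/2)$ or $D_4^{\vee}$ cases, and the degree-$27$ splitting field arises only after identifying and discarding extraneous factors and then taking a compositum whose ramification indices combine correctly. A secondary obstacle is to select, from the roughly $24$ admissible sections, a subset of six that is $\Z$-linearly independent and saturates the lattice; the Gram-determinant test $\det(M_3)=1/3$ is the single verification that guarantees the chosen subset is a basis rather than spanning a finite-index sublattice of $E_6^{\vee}$.
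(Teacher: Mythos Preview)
Your plan matches the paper's proof almost exactly: the same ansatz $Q=(av+b,\,v^{2}+dv+e)$, the same elimination to a fundamental polynomial in $a$, the same Gram-matrix check against $\det(M_3)=1/3$ using only the type-$IV$ correction at $v=\infty$, and the same birational transfer to $\Ee_{2,3}$. The only differences are that the fundamental polynomial $\Phi(a)=a^{27}-1344a^{18}-40704a^{9}-4096$ is already irreducible over $\Q$ (so there are no extraneous factors to discard, and $54$ rather than $24$ admissible sections once both signs $\pm(v^{2}+dv+e)$ are counted), and the paper supplements the Gram-determinant test with a specialization argument $sp_{\infty}\colon P\mapsto -1/a$ to confirm linear independence.
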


\begin{proof}
	First, we consider  the elliptic surface $\Ee_{1,3}$.
	Since the Mordell-lattice $\Ee_{1,3}(\CC(v))$ is isomorphic to  $E_6^\vee$, 
	there are $54$ minimal sections of norm $4/3$ of the  form
	$Q=(a v+b , \pm v^2+ d v +e)$ by   \cite[Theorem 10.5]{Shioda1991d}. 
	Substituting this into the defining equation of $\Ee_{1,3}$ leads to the following relation between the coefficients:
	\begin{equation}
		\label{rel4}
		2 d-a^3 =0, \  2 e+ d^2 -3 a^2 b = 0, \
		2 d e - 3 a b^2 -1  =0, \  e^2-b^3 =0.
	\end{equation}
	Finding $d$ and $e$ from the first and second relation, respectively,  and then substituting to the others, 
	one obtains two polynomials in $b$ of degree $2$ and $3$. Eliminating  $b$ gives the fundamental polynomial of  $\Ee_{1,3}$ in terms of $a$ as follows:
	\begin{equation}
		\Phi(a)= a^{27} - 1344 a^{18} - 40704 a^9 - 4096.
	\end{equation}
	We note that the splitting field  of $\Ee_{1,3}$  is the finite extension $\KK_3$ of $\Q$ for which  this polynomial  splits completely into linear factors over $\KK_3$. The  $54$ sections in $\Ee_{1,3}(\KK_3(v))$ of the above form correspond to $\pm a$ where $a$ runs over  the roots of $\Phi(a)$.  The generators of $\Ee_{1,3}(\KK_3(v))$ are $6$ sections  $Q_1,\cdots, Q_6$ having height pairing matrix with determinant
	equal to $1/3$ module $\Z$.
	
	Letting $A=a^9/16$, we obtain a cubic polynomial $F(A)=A^3- 84 A^2- 159 A-1=0$, which can be factored into linear factors over $\Q(\zeta_9)$, where
		$$\zeta_{9}=\frac{( i \sqrt{3}-1 )(-4( i \sqrt{3})^{1/3}+1)}{4}. $$
	Indeed, we have  $F(A)=(A-A_1)(A-A_2)(A-A_3)$ with
\begin{align*}
	A_1& =- \lambda_1^{-6} \lambda_2^{-3}={\frac {3\, \left( 7164+228\,i\sqrt {3} \right) ^{2/3}+56\,\sqrt [3]{
				7164+228\,i\sqrt {3}}+1116}{\sqrt [3]{7164+228\,i\sqrt {3}}}},\\
	A_2& =- \lambda_2^{-6} \lambda_3^{-3}= {\frac {3 (i \sqrt {3}-1) \left( 7164+228\,i\sqrt {3} \right) ^{2/3}
			+112\,	\sqrt [3]{7164+228\,i\sqrt {3}}-1116(i\sqrt {3} +1)}{\sqrt [3]{7164+228\,i\sqrt {3}}}},\\
	A_3&= - \lambda_1^{-3} \lambda_3^{-6}={\frac {-3 (i \sqrt {3}+1) \left( 7164+228\,i\sqrt {3} \right) ^{2/3}
			+112\,	\sqrt [3]{7164+228\,i\sqrt {3}}-1116(i\sqrt {3} -1)}{\sqrt [3]{7164+228\,i\sqrt {3}}}},
\end{align*}	
	where 
	$$\lambda_1=\zeta_9+\frac{1}{\zeta_9}, \  \lambda_2=\zeta_9^2+\frac{1}{\zeta_9^2}, \ \lambda_3=\zeta_9^4+\frac{1}{\zeta_9^4},$$ 
	are the standard units in the maximal real subfield $\Q(\zeta_9 + 1/\zeta_9)$ in $\Q(\zeta_9)$.
	Then, the roots of  $\Phi(a)$ are of the form $a_{\ell,j}=\zeta_9^\ell \cdot (16 A_j)^{1/9}$ where $\ell = 0, \cdots, 8$ and $j=1, 2, 3$.
	Since the fundamental polynomial $\Phi_3(a)$ completely decomposed over $\Q(\zeta_9, (16 A_1)^{1/9})$, hence the splitting field of  $\Ee_{1,3}$ over $\Q$ is the field $\KK_3$, which is the compositum of two polynomials $x^9-1$ and  $\Phi_3(a)$. Using Pari/GP, see \cite[GP360]{Shioda360-Codes}, we found  a defining minimal polynomail for the compositum field as follows:
	\begin{align}
	f_3(x)	& = x^{27} - 9x^{26} + 36x^{25} - 81x^{24} + 108x^{23} - 90x^{22} + 78x^{21} \notag \\
		& \quad  - 288x^{17} - 45x^{16} + 288x^{15}  - 108x^{20} + 45x^{19} + 195x^{18} \notag \\
		& \quad + 198x^{14} - 828x^{13} + 546x^{12} + 324x^{11} - 612x^{10} + 222x^{9} \notag  \\
		& \quad + 126x^{8} - 162x^{7} + 54x^{6} + 36x^{5} - 36x^{4} - 3x^{3} + 9x^{2} - 1. \label{f3}
	\end{align}
	To determine  generators of $\Ee_{1,3}(\KK_3(v))$, we used the determinant condition on the Gram matrix and find out  the following roots $\Phi_3(a)$ of leading  to linearly independent  sections:
	
		\begin{align*}
		a_1& =  2^{1/3} \, A_1^{1/9}, & 
		a_2& =  2^{1/3} \, \zeta_9 \, A_1^{1/9}, & 	a_3 &=   2^{2/9} A_2^{1/9},\notag \\
		a_4& =  2^{2/9}\,  \zeta_9 \,  A_2^{1/9},& 
		a_5 &=2^{2/9}\, A_3^{1/9}, &  	a_6&=    2^{2/9} \, \zeta_9^2  \, A_2^{1/9}.	
	\end{align*}
	Indeed, by the relations \eqref{rel4}, one can determine the coefficients $b_i, d_i, e_i$, for $j=1,\cdots, 6$ leading to the 
	corresponding points $Q_j=(a_j v+ b_j, t^2 + d_j v + e_j)$ in $\Ee_{1,3}(\KK_3(v)),$ 
	see \cite[Points3]{Shioda360-Codes}.
	
	It is easy to see  that  the set of $1/a_j$'s are linearly independent algebraic numbers over $\Q$.  Hence,  the specializing map $sp_{\infty}: \Ee(\KK_3(v)) \rightarrow \KK_3^+$ is injective which implies that the six points $Q_1, \cdots Q_6$ are linearly independent in   $\Ee(\KK_3(v))$. 
	Since $\Ee_{1, 3}$ has a singular fiber of type $IV$ at $\infty$, so $\text{contr}_\infty(Q_j)= 2/3$ which leads to 
	$ \left\langle Q_j, Q_j\right\rangle = 4/3,$ for each $j=1,\cdots, 6$.
	Furthermore,  since   the sections  $(Q_j)$ and  $(Q_j)$ meets the fiber at same component,  we have  $\text{contr}_\infty (Q_{j_1}, Q_{j_2})= 2/3$ and hence 
	$$\left\langle Q_{j_1}, Q_{j_2}\right\rangle = 1 - (Q_{j_1}\cdot  Q_{j_2})- 2/3= 1/3 - (Q_{j_1}\cdot  Q_{j_2}).$$ 
	Here, the intersection number $(Q_{j_1}\cdot  Q_{j_2})$ can be computed by 
	\begin{align*}
		\left( Q_{j_1}, Q_{j_2}\right)& = \left( Q_{j_1}, Q_{j_2}\right)_{v\neq \infty} +\left( Q_{j_1}, Q_{j_2}\right)_{v=\infty} \\
		& = 1- \deg(x_{j_1}-x_{j_2}) +\deg( \gcd (x_{j_1}-x_{j_2}, y_{j_1}-y_{j_2})  ).
	\end{align*}
	Therefore, one may check that  the height pairing matrix   is equal to the following,
	$$M_3=  \begin{pmatrix} 
		4/3  & 1/3  & 1/3  & 1/3  & 1/3  & -2/3
	\\ \noalign{\medskip}
	1/3  & 4/3  & 1/3  & 1/3  & 1/3  & -2/3\\ \noalign{\medskip}
	1/3  & 1/3  & 4/3  & -2/3  & 1/3  & -2/3\\ \noalign{\medskip}
	1/3  & 1/3  & -2/3  & 4/3  & 1/3  & 1/3	\\ \noalign{\medskip}
	1/3  & 1/3  & 1/3  & 1/3  & 4/3  & 1/3\\ \noalign{\medskip}
	-2/3  & -2/3  & -2/3  & 1/3  & 1/3  & 4/3
	 \end{pmatrix}. 	$$
	with determinant $1/3$ as desired for the Mordell-Weil lattice $\Ee_{1,3}(\KK_3(v)) \cong E_6^*.$
	
	Finally, by the map $(x, y, v) \mapsto (v^2 x, v^3 y, 1/v)$, one may transform the points
	$Q_j \in \Ee_{1, 3}(\KK_3(v))$
	into the points $\widetilde{Q_j} \in \Ee_{2, 3}(\KK_3(v))$	 as in the statement of theorem.
\end{proof}

\section{The splitting field and generators of $\Ee_{1, 4}$ }
\label{E8-(1,4)}

We prove the following theorem  in this section. For details, the reader can see  \cite[Check-4]{Shioda360-Codes}.
\begin{thm}
	\label{main3}
	The splitting field $\KK_4$ of the elliptic  surface  $\Ee_{1, 4}: y^2=x^3+   v (v^4+1)$ has degree 48  with a defining minimal polynomial given by \ref{f4}.
	The group  $\Ee_{1, 4}(\KK_4(v))$ is generated by the points
	$$Q_j=\left( \frac{ v^2 + a_j v +b_j}{u_j^2}, \, \frac{ v^3 +c_j v^2 + d_j v +e_j}{u_j^3},\right) $$ 
	for $j=1,\cdots, 8,$ 	where 
	$a_j, b_j, c_j, d_j, e_j$  and $u_j$ are given in
	\cite[Points-4]{Shioda360-Codes}.
\end{thm}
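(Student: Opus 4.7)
The plan is to proceed in direct analogy with the proofs of Theorems \ref{main7}, \ref{main6}, \ref{main5}, and \ref{main4}, exploiting the fact that by Table~\ref{Tab1} the surface $\Ee_{1,4}$ has no reducible singular fibers, so $\Ee_{1,4}(\CC(v)) \cong E_8$ has rank $8$ and the trivial lattice contribution to the height pairing vanishes. In particular, by Shioda's classification in \cite{Shioda1991d}, the minimal sections (of norm $2$) are integral sections $(x(v), y(v))$ with $\deg_v x \le 2$ and $\deg_v y \le 3$, and there are exactly $240$ of them, forming the root system of $E_8$.

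First I would set up the Ansatz
$$
Q = \left( \tfrac{v^2 + a v + b}{u^{2}}, \; \tfrac{v^3 + c v^2 + d v + e}{u^{3}} \right),
$$
normalized so that the leading coefficients $(1/u^2, 1/u^3)$ encode a single scaling parameter. Substituting into $y^2 = x^3 + v(v^4 + 1)$ and clearing denominators gives a polynomial identity in $\CC[v]$. Comparing the coefficients of $v^0,\ldots,v^5$ yields six algebraic equations in the six unknowns $a,b,c,d,e,u$. Using the lowest-degree equations to solve $a,b,c,d,e$ in terms of $u$ (this is the standard pattern of the earlier sections: one eliminates the linear variables and reduces the system to a single polynomial condition on the scaling parameter), I would obtain a fundamental polynomial $\Phi_4(u)$ whose degree equals the number of minimal sections divided by the obvious symmetries, matching the expected degree $48$ of the splitting field.

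Next I would factor $\Phi_4(u)$ over progressively larger base fields. Based on the shape $v^4+1$ and the analogy with the $(1,3)$ case where a cubic in $A = a^9/16$ appeared, I expect $\Phi_4(u)$ to admit a substitution (for example $U = u^{12}$ or similar) that reduces it to a lower-degree polynomial whose splitting field over $\Q$ contains cyclotomic units such as $\zeta_{12}$, together with radicals like $\sqrt{2}, \sqrt{3}, \sqrt{6}, \sqrt[3]{3\sqrt{3}-5}$, as suggested by the commented-out values of the $a_j, b_j, c_j, d_j, e_j$ in the statement. Once the factorization is known, I would use \texttt{Pari/GP} (as in \cite[GP360]{Shioda360-Codes}) to compute a single defining minimal polynomial $f_4(x)$ of degree $48$ for the compositum, which gives the splitting field $\KK_4$.

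Finally, I would select $8$ roots $u_1, \dots, u_8$ of $\Phi_4(u)$ that yield linearly independent sections $Q_1, \dots, Q_8$, compute their Gram matrix using the simplified pairing $\langle P, Q \rangle = 2 - (P \cdot Q)$ (valid here because the trivial lattice is empty, so there are no local correction terms), and verify that $\det(M_4) = 1$, which is precisely the discriminant of $E_8$ and thus certifies that the chosen sections form a $\Z$-basis of the Mordell--Weil lattice. The intersection numbers $(Q_i \cdot Q_j)$ can be computed from the resultants of the $x$-coordinates using the standard formula
$$
(Q_i \cdot Q_j) = 1 - \deg(x_i - x_j) + \deg\bigl(\gcd(x_i - x_j, y_i - y_j)\bigr),
$$
as in the proof of Theorem \ref{main4}. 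The main obstacle will be the sheer bookkeeping: the polynomial $\Phi_4(u)$ has degree $48$ and, unlike the $(1,3)$ case where the nine-fold cyclotomic symmetry reduced everything to a cubic, here the $E_8$ lattice has no proper root subsystem generated by the symmetries of the equation, so identifying $8$ roots whose Gram matrix has determinant exactly $1$ (rather than a positive multiple) requires carefully tracking the specialization map $sp_\infty$ and the sublattice indices, which is why the computer algebra verification in \cite[Check-4, Points-4]{Shioda360-Codes} is essential.
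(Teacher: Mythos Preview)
Your proposal follows essentially the same route as the paper: identical Ansatz, elimination to a fundamental polynomial in the scaling parameter, factorization and compositum via \texttt{Pari/GP}, then selection of eight roots whose Gram matrix is unimodular. Two small corrections worth noting: the effective substitution in the paper is $U = u^{6}$ (coming from the order-$6$ automorphism $(x,y)\mapsto(\zeta_3 x,-y)$ of the $j=0$ fiber), not $u^{12}$, and the resulting $\Phi(U)$ has degree $40$, so $\Phi_4(u)=\Phi(u^6)$ has degree $240$ (one root per minimal vector of $E_8$); the number $48$ is the degree of the splitting field $\KK_4$, not of the fundamental polynomial, and arises because $\Phi(U)$ already splits completely over $\Q(\zeta_{24})$ so that $\KK_4 = \Q(\zeta_{24})(U_1^{1/6})$ has degree $8\cdot 6=48$.
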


\begin{proof}
	By Table\ref{Tab1}, we   known that the Mordell-Weil lattice   $\Ee_{1, 4}(\CC(v))$ is isomorphic to $E_8$,  see also \cite{Shioda1991d}. Hence, its   $240$ minimal sections
	correspond to the  $240$ points  $Q\in \Ee_{1, 4}(\CC(v))$ of the form:
	\begin{equation}
		\label{eq1-4}
		Q=\left(\frac{v^2 +a v +b}{u^2}, \frac{v^3 + c v^2 + d v + e}{u^3} \right),
	\end{equation}
	for suitable constants $a, b, c, d, e, v \in \CC.$ 
	Putting the coordinates of $Q$ into   $ y^2=x^3 + v ( v^4+1)$  and letting $U=u^6$, one  obtain  the following relations:
	\begin{align}
		2c -3a -U & =0, & 
		-3a^2+c^2-3 b+2 d & =0, &
		-a^3-6 a b+2cd+2e & =0, \notag \\ \label{rel3b}
		-3 a b^2+2 d e-U & =0, &
		-3 a^2 b-3 b^2+2 c e+d^2 & =0,& 
		e^2 - b^3 & = 0.
	\end{align}
Using MAPLE, one can see that 	the fundamental polynomial of an ideal generated by the above equations  respect to the variable $U,$   is the following  polynomial of degree $40$ in $U$ which can be factored over $\Q$ as follows:
	\begin{align*}
		\Phi(U)& =(U^2-44 U-2)(U^2+44 U-2) (U^2+4 U+54) (U^2-4 U+54)  \\
		& \times (U^4+1940 U^2+4) (U^4-832 U^2+256) (U^4+832 U^2+256) (U^4-92 U^2+2916) \\
		& \times(U^8-200 U^7+20000 U^6+58800 U^5+87608 U^4-117600 U^3+80000 U^2+1600 U+16)\\
		& \times (U^8+200 U^7+20000 U^6-58800 U^5+87608 U^4+117600 U^3+80000 U^2-1600 U+16)
	\end{align*}	
	This polynomial can be decomposed into linear factors over 
	$k=\Q (i, \sqrt{2}, \sqrt{3})=\Q(\zeta_{24})$. In below, we list one root of each factors:
	\begin{align*}
		U_1 & =22 + 9 \sqrt{6}, & U_2& = -22 + 9 \sqrt{6}, \\ U_3& =-2 +5 i \sqrt{2}, &	U_4 & =2+ 5 i \sqrt{2},  \\ U_5 &= i (9 \sqrt{6}-22), &	U_6 &= 2 \sqrt{2} (3 \sqrt{3}-5), \\ 
		 U_7 &= 2 i \sqrt{2} (3 \sqrt{3}-5), &
		U_8&=	  5 \sqrt{2} +2 i,\\
		U_9 & = (3 \sqrt{3}+5) (7 \sqrt{2}+10) \left(\frac{1-i}{2} \right), &
		U_{10} &=  (3 \sqrt{3}-5) (7 \sqrt{2}+10) \left(\frac{1-i}{2} \right),
	\end{align*}  
	and others roots are the conjugate of these with maps  $$\gamma: \I \mapsto \I, \ \sigma: \sqrt{3}\mapsto -\sqrt{3}, \ \text{and} \ \rho: \sqrt{2}\mapsto -\sqrt{2}.$$ 
	The fundamental polynomial $\Phi_4(u):=\Phi(u^6)$ of  $\Ee_{1, 4}$  has roots of the form 
	$u=\zeta_{6}^\ell \cdot U_j^{\frac{1}{6}}$ where
	$\zeta_6$ is a $6$-th root of unity, $0\leq \ell \leq 5$, and $U_j$ varies on the set of $40$
	roots of $\Phi(U)$ listed above.
		Therefore,  the splitting field $\KK_4$ of $\Ee_{1, 4}$ contains 
	$k(\zeta_6, U_1^{\frac{1}{6}})$, where $U_1$ is one of the roots of $\Phi(U)$.
	Using Pari/GP, \cite[Gp-360]{Shioda360-Codes} we find a defining minimal polynomial for  $\KK_4$ as follows:
\begin{align}
f_4(x)	& =x^{48} + 36x^{44} + 453x^{40} + 2188x^{36} + 2046x^{32} - 31260x^{28} + 68697x^{24}\notag \\
	& \quad - 31260x^{28} + 68697x^{24} + 2046x^{16} + 2188x^{12} + 453x^{8} + 36x^{4} + 1. \label{f4}
\end{align}

	%

Using the unimodularity condition  of the  Gram matrix of  any set of generators for  $\Ee_{1, 4}(\CC(v))$, we searched for 8 roots of $\Phi_4(u)$ that leads to points with a  Gram matrix of determinant 1, and found out those given by \ref{U-(1,4)}, which can be rewritten as:

	\begin{align}
	u_1 & = (22+9 \sqrt{6})^{1/6},&
	u_2 & =   (22-9 \sqrt{6})^{1/6}, \notag \\
	u_3 & =   (-22+9 \sqrt{6})^{1/6}, & 
	u_4 & =   (-22-9 \sqrt{6})^{1/6}, \notag \\
	u_5 & =   (i(9 \sqrt{6}-22))^{1/6}, &
	u_6  &=   (-i(9 \sqrt{6}+22))^{1/6}, \notag \\
	u_7 & = (2 \sqrt{2}(3 \sqrt{3}-5))^{1/6}, &
	u_8 & = (-2 \sqrt{2}(3 \sqrt{3}-5))^{1/6}.\label{U-(1,4)}
\end{align} 

In order to show that these are linearly independent algebraic  numbers over $\Q$, we write:
	
	{\small	\begin{align*}
			u_1 & = U_1^{1/6}=2^{1/12}\cdot \zeta_{24}^3 \cdot \left( \zeta_{24}^3 \cdot \epsilon_1^3 \cdot  \epsilon_2^{-3} \cdot  \epsilon_3^{3}\right)^{1/2}, \\
			u_2 &= (U_1^{\sigma \rho})^{1/6}=2^{1/12}\cdot \zeta_{24}^2 \cdot \left( \zeta_{24}^3 \cdot \epsilon_1^{-3} \cdot  \epsilon_2^{3} \cdot  \epsilon_3^{-3}\right)^{1/2}, \\
			u_3 &= U_2^{1/6}=2^{1/12} \cdot \left( \zeta_{24}^3 \cdot \epsilon_1^{-3} \cdot  \epsilon_2^{3} \cdot  \epsilon_3^{-3}\right)^{1/2}, \\
			u_4 &= (U_2^{\sigma \rho})^{1/6}=2^{1/12}\cdot \zeta_{24} \cdot \left( \zeta_{24}^3 \cdot \epsilon_1^{3} \cdot  \epsilon_2^{-3} \cdot  \epsilon_3^{3}\right)^{1/2}, \\
			u_5 & = U_5^{1/6}=2^{1/12}\cdot \zeta_{24}^2 \cdot \left( \zeta_{24}^3 \cdot \epsilon_1^{3} \cdot  \epsilon_2^{-3} \cdot  \epsilon_3^{3}\right)^{1/2}, \\ 
			u_6 &= (U_5^{ \gamma})^{1/6}=2^{1/12} \cdot \zeta_{24} \cdot \left( \zeta_{24}^3 \cdot \epsilon_1^{-3} \cdot  \epsilon_2^{3} \cdot  \epsilon_3^{-3}\right)^{1/2}, \\
			u_7 &= U_7^{1/6}=2^{1/3}\cdot \zeta_{24}^2 \cdot ( \zeta_{24}^3 \cdot \epsilon_1^{-3})^{1/2}, \\
			u_8 &= (U_6^{\sigma})^{1/6}=(2)^{1/3}\cdot \zeta_{24}^3 \cdot ( \zeta_{24}^3 \cdot \epsilon_1^{3})^{1/2}. 
	\end{align*}}	
	where $ \zeta_{24}= - ((i-1)+(1+i)\sqrt{3})\sqrt{2}/4$ and 
	$\epsilon_1=i-(1- i \sqrt{3})/2, \ \epsilon_2= \zeta_{24}^\gamma -1, \ \epsilon_3=\zeta_{24}^{\gamma \sigma}-1$
	are the fundamental units of the field $\Q(\zeta_{24})$.
	Dividing $u_j$ with $u_1$, for $j=2,\cdots, 8$, give us the following set of algebraic  numbers,
	$$\{ 1,\, \zeta_{24}^{-1} \epsilon_1^{-3} \epsilon_2^3 \epsilon_3^{-3},\,
	\zeta_{24}^{-3}\epsilon_1^{-3}\epsilon_2^3 \epsilon_3^{-3},\,\zeta_{24}^{-2},\, \zeta_{24}^{-1},\,  
	\zeta_{24}^{-2} \epsilon_1^{-3} \epsilon_2^3 \epsilon_3^{-3},\,
	2^{1/4} \zeta_{24}^{-1} \epsilon_1^{-3} (\epsilon_2^3 \epsilon_3^{-3})^{1/2},\,
	2^{1/4}  (\epsilon_2^3 \epsilon_3^{-3})^{1/2} \}.$$
	It is easy to check that  this set is linearly independent over $\Q$. 
	By the injectivity of  specializing map $sp_\infty: \Ee_{1,4}(\KK_4) \rightarrow \KK_4^+$ given by $P \mapsto u(P)$, one may conclude that the set of eight points $Q_1,\cdots, Q_8$
	$\in \Ee_{1, 4}(\KK_4(v))$.
	 	By calculating the values of  height pairings  $\left\langle  Q_{j_1}, Q_{j_2}\right\rangle $, one gets the Gram matrix of these points as
	\[M_4 = \begin{pmatrix}
	2   & 1  & 0  & 0  & 0  & 0  & 0  & 1\\ \noalign{\medskip}
	1   & 2  & 0  & 0  & -1  & 0  & 1  & 0\\ \noalign{\medskip}
	0   & 0  & 2  & 1  & -1  & -1  & 0  & -1	\\ \noalign{\medskip}
	0   & 0  & 1  & 2  & 0  & 0  & 1  & 0\\ \noalign{\medskip}
	0   & -1  & -1  & 0  & 2  & 0  & 0  & 1\\ \noalign{\medskip}
	0  & 0  & -1  & 0  & 0  & 2  & 0  & 1\\ \noalign{\medskip}
	0  & 1  & 0  & 1  & 0  & 0  & 2  & 0\\ \noalign{\medskip}
	1  & 0  & -1  & 0  & 1  & 1  & 0  & 2
	\end{pmatrix}, \]  
	which  is a unimodular matrix. Therefore, we have completed  the proof of Theorem \ref{main3}.
\end{proof}

\section{The  splitting field and generators of  $\Ee_{0,5}$ and $\Ee_{1,5}$}
\label{E-{0,5}}

In this section, we  consider the isomorphic  elliptic surfaces   $\Ee_{1, 3}: y^2=x^3+ v^5+1$
and  $\Ee_{1, 5}: y^2=x^3+   v (v^5+1)$, via the birational map 
$\phi: (x,y,v) \mapsto (v^2 x, v^3 y, 1/v)$.
By Table \ref{Tab1}, we have   $\Ee_{0, 5}(\CC(v))^0\iso \Ee_{1,5}(\CC(v))\iso E_8$.
The following theorem is proved in  \cite[Theorem 1.3]{SS-Shz-2025}.
\begin{thm}
	\label{main2}
	The splitting field  of the elliptic  surfaces  $\Ee_{0,5}$  and $\Ee_{1,5}$	is 
	$\KK_5 = \Q \left(\zeta_{30}, (60 v_1)^{\frac{1}{30}}\right)$, where
	$$ v_1=564300+252495\,\sqrt {5}+31\,\sqrt {654205350+292569486\,\sqrt {5}},$$ 
	and $\zeta_{30}$ a $30$-th root of unity as 
	$$	\zeta_{30}=\frac{1}{8}\left(\sqrt{3}+\mathrm{I}\right) \left( \left(1-  \sqrt{5}\right) \sqrt{\frac{5+  \sqrt{5}}{2}}+\mathrm{I} \left(\sqrt{5}+1\right)\right).$$
	A defining minimal polynomial $f_5(x)$ of degree 120 given in \cite[minpols]{Shioda-Codes} or \cite[Gp-360]{Shioda360-Codes}.
	
	Moreover,  the   group $\Ee_{0, 5}(\KK_5(v))$ is generated by the points
	$$Q_j=\left( \frac{ v^2 + a_j v +b_j}{u_j^2}, \, \frac{v^3 +c_j v^2 + d_j v +e_j}{u_j^3},\right),  $$
	and   $\Ee_{1, 5}(\KK_5(v))$ is generated by the points
	$$\tilde{Q}_j=\left( \frac{ b_j v^2 + a_j v +1}{u_j^2}, \,
	\frac{e_j v^3 +d_j v^2 + c_j v +1}{u_j^3} \right),$$ 
	where  $a_j, b_j, c_j, d_j, e_j$  and  $u_j$'s for $j=1,\cdots, 8$  are given in 	\cite[Points-5]{Shioda-Codes} or \cite[Points-5]{Shioda360-Codes}.	
	The   gram matrix of these points is the following unimodular matrix,
	\begin{equation}
		M_5 =
		\begin{pmatrix}
			\label{m5}
			2 & 1 & 1 & 1 & 0 & 0 & 0 & 1 
			\\ \noalign{\medskip}
			1 & 2 & 0 & 1 & 1 & 1 & 0 & 1 
			\\ \noalign{\medskip}
			1 & 0 & 2 & 1 & 0 & -1 & 1 & 0 
			\\ \noalign{\medskip}
			1 & 1 & 1 & 2 & 1 & 0 & 1 & 0 
			\\ \noalign{\medskip}
			0 & 1 & 0 & 1 & 2 & 0 & 1 & 0 
			\\ \noalign{\medskip}
			0 & 1 & -1 & 0 & 0 & 2 & -1 & 1 
			\\ \noalign{\medskip}
			0 & 0 & 1 & 1 & 1 & -1 & 2 & -1 
			\\ \noalign{\medskip}
			1 & 1 & 0 & 0 & 0 & 1 & -1 & 2 	
		\end{pmatrix}.
	\end{equation}
\end{thm}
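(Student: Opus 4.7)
The plan is to mirror the approach successfully applied to $\Ee_{1,4}$ in Theorem \ref{main3}, exploiting the fact that by Table \ref{Tab1} both $\Ee_{0,5}$ and $\Ee_{1,5}$ have Mordell--Weil lattice isomorphic to $E_8$. Since $E_8$ admits exactly $240$ minimal vectors of norm $2$, and the height pairing on a rational elliptic surface with $T=\{0\}$ forces these minimal sections to be integral polynomial sections of the form
\[
Q=\left(\frac{v^2+av+b}{u^2},\ \frac{v^3+cv^2+dv+e}{u^3}\right),
\]
I would first substitute this ansatz into $y^2=x^3+v^5+1$ and collect coefficients of powers of $v$. Setting $U=u^6$ (to absorb the sixth-root ambiguity from the quasi-homogeneous weights) produces a polynomial system in the variables $a,b,c,d,e,U$ analogous to \eqref{rel3b}, and eliminating $a,b,c,d,e$ via a Groebner basis or resultant computation (using \texttt{PolynomialIdeals} in Maple) yields the fundamental polynomial $\Phi(U)$ whose roots parametrize the $u$-values of minimal sections.

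Next, I would factor $\Phi(U)$ over $\Q$ and observe that, in contrast to the $\Ee_{1,4}$ case where the base field was $\Q(\zeta_{24})$, the quintic coefficient $v^5+1$ forces $\sqrt{5}$ and fifth roots of unity to appear. I expect $\Phi(U)$ to split completely only after adjoining $\zeta_{30}=\zeta_3\zeta_{10}$ together with a sixth root of a representative root $U_1$, which, after simplification of nested radicals, takes the stated form $60 v_1$ with $v_1=564300+252495\sqrt{5}+31\sqrt{654205350+292569486\sqrt{5}}$. Then $\KK_5\supseteq \Q(\zeta_{30},(60v_1)^{1/30})$, and computation via \texttt{Pari/GP} of the compositum yields an absolute defining polynomial $f_5(x)$ of degree $120=[\Q(\zeta_{30}):\Q]\cdot 30 / \gcd$-factor, matching the degree recorded in \cite[minpols]{Shioda-Codes}.

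The third step is to extract a basis. Among the $240/2=120$ roots $u$ of $\Phi_5(u)=\Phi(u^6)$ (modulo the sign ambiguity $y\mapsto -y$), I would search numerically or symbolically for an $8$-tuple $u_1,\dots,u_8$ whose corresponding sections $Q_1,\dots,Q_8$ yield a unimodular Gram matrix. The height pairing is computed via $\langle P,Q\rangle =2+2(P\cdot O)-(P\cdot Q)-\sum\text{contr}_v(P,Q)$; since $\Ee_{0,5}$ has only irreducible singular fibers (all of Type $II$) by Corollary \ref{cor:E1_10} applied analogously, the local correction terms vanish and the computation reduces to the intersection numbers $(Q_i\cdot Q_j)=1-\deg(x_i-x_j)+\deg\gcd(x_i-x_j,y_i-y_j)$. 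Verifying $\det=1$ confirms the chosen sections generate $E_8$. Linear independence over $\Z$ (equivalently, injectivity of $sp_\infty:P\mapsto u(P)$) follows from showing that $u_2/u_1,\dots,u_8/u_1$ generate an $8$-dimensional $\Q$-subspace of $\KK_5^\times\otimes\Q$, analogous to the fundamental-unit argument used for $\Ee_{1,4}$.

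Finally, the generators for $\Ee_{1,5}(\KK_5(v))$ are obtained by applying the birational map $\phi:(x,y,v)\mapsto(v^2x,v^3y,1/v)$ of Theorem \ref{thm:birational}, which, by the coefficient-reversal rule of Lemma \ref{lem:poly_transform}(i), sends the numerator polynomials $v^2+a_jv+b_j$ and $v^3+c_jv^2+d_jv+e_j$ to their reversed forms $b_jv^2+a_jv+1$ and $e_jv^3+d_jv^2+c_jv+1$, giving the $\tilde{Q}_j$ exactly as stated. The main obstacle, and the reason the bulk of the technical work rests in \cite[Thm.~1.3]{SS-Shz-2025}, is the explicit simplification of the nested radicals defining $v_1$ and the verification that the chosen eight roots $u_j$ are $\Q$-linearly independent modulo torsion—this is where the arithmetic of $\Q(\zeta_{30})$ and its fundamental units must be invoked carefully, rather than being reducible to the cyclotomic/quadratic scaffolding sufficient for the lower-rank cases.
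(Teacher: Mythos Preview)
Your sketch is methodologically sound and mirrors exactly what the paper does for the other $E_8$ cases ($\Ee_{1,4}$, etc.), but you should be aware that the paper itself offers \emph{no} proof of this theorem: the entire statement is simply attributed to \cite[Theorem~1.3]{SS-Shz-2025}, and the section consists of nothing beyond the theorem statement. So there is no ``paper's own proof'' to compare against; you have supplied more argument than the paper does, and your outline is precisely the kind of computation one expects the cited reference to carry out.

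One small point worth tightening: your degree count ``$120=[\Q(\zeta_{30}):\Q]\cdot 30/\gcd\text{-factor}$'' is too loose to be convincing, since $[\Q(\zeta_{30}):\Q]=\varphi(30)=8$ and $8\cdot 30=240$, not $120$. The correct accounting requires observing that $v_1$ already lies in a real subfield containing $\sqrt{5}$ (and that the nested radical $\sqrt{654205350+292569486\sqrt{5}}$ is real), so adjoining $(60v_1)^{1/30}$ over $\Q(\zeta_{30})$ has degree strictly less than $30$; the exact degree-$120$ claim ultimately rests on the \texttt{Pari/GP} verification you mention, just as in the paper's other sections. This is not a gap in strategy, only in the arithmetic bookkeeping.
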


\section{The  splitting field and generators of the  $K3$ surface $\Ee'$}
\label{K3}

It is known that $\Ee': y^2=x^3+ v (v^{10}+1)$ is a $K3$ surface of Mordell-Weil rank $16$ over $\CC(v)$, see the Corollary \ref{cor:E1_10}.
Using the map  $(x, y) \mapsto (x/v^2, y/v^3)$,   one can get an isomorphic surface 
$\Ee'': y^2 =x^3 + v^5 + 1/v^5$, with  the converse map $(x,y) \mapsto (v^2 x, v^3 y)$.
The   splitting field  $\KK'$  and generators of  group $\Ee'(\KK(v))$ is determined  
in \cite[Theorem 1.4]{salami2022generators}.
 
\begin{thm}
	\label{main1}
	The splitting field $\KK'$ of the elliptic $K3$ surface  $\Ee': y^2=x^3+   v (v^{10}+1)$ has degree 192,  with  a minimal  defining polynomial $f_{10}(x)$ given in \cite[min-pols]{k3-codes}. 
	
	Moreover,  a set of 16 independent generators of $\Ee'(\KK'(t)) $ includes $Q_j=\left( x_j(t), y_j(t)\right) $ with
	$$\begin{aligned}
		x_j(t) &= \frac{t^4 + a_j t^3 + (b_j +2) t^2 + a_j t +1}{u_j^2  t^2}, \\
		y_j(t) &=\frac{t^6 +c_j t^5+ (d_j +3) t^4 +(2 c_j +e_j) t^3 +(d_j +3) t^2 + c_j t +1}{u_j^3 t^3},\\
	\end{aligned}
	$$
	and $Q_{j+8}=\left( x_{j}(\zeta_5 t), y_{j}( \zeta_5 t)\right) $  	for $j=1, \ldots, 8$,
	where   $a_j, b_j, c_j, d_j, e_j$  and  $u_j$'s are given in 
	 \cite[Points-10]{Shioda360-Codes}.
	 The Gram matrix $M_{10}$ of the  points $Q_j$'s is a matrix with determinant   $5^4$ as follows:

		\[
	\small 
	\setcounter{MaxMatrixCols}{20} 
	M_{10}=
	\begin{pmatrix}
	 	4 & 0 & 0 & 2 & 0 & 0 & 0 & 0 & -2 & 0 & 0 & 0 & 2 & 0 & 0 & 1 	\\ \noalign{\medskip}
	 	0 & 4 & 2 & 0 & 0 & 0 & -2 & 2 & 0 & -2 & -1 & 1 & 0 & 2 & 2 & -1 	\\ \noalign{\medskip}
	 	0 & 2 & 4 & 0 & 0 & 0 & 0 & 0 & 0 & -1 & 0 & 0 & 0 & 1 & 2 & 0 	\\ \noalign{\medskip}
	 	2 & 0 & 0 & 4 & 2 & 2 & 0 & 0 & 0 & 1 & 0 & 0 & 1 & 0 & 0 & 2 	\\ \noalign{\medskip}
	 	0 & 0 & 0 & 2 & 4 & 0 & 0 & 2 & 2 & 0 & 0 & 1 & 0 & 0 & 0 & 0 	\\ \noalign{\medskip}
	 	0 & 0 & 0 & 2 & 0 & 4 & 2 & 0 & 0 & 2 & 1 & 0 & 0 & 0 & -1 & 1 	\\ \noalign{\medskip}
	 	0 & -2 & 0 & 0 & 0 & 2 & 4 & 0 & 0 & 2 & 2 & 0 & 0 & -1 & -2 & 0 	\\ \noalign{\medskip}
	 	0 & 2 & 0 & 0 & 2 & 0 & 0 & 4 & 1 & -1 & 0 & 2 & 0 & 1 & 0 & -2 	\\ \noalign{\medskip}
	 	-2 & 0 & 0 & 0 & 2 & 0 & 0 & 1 & 4 & 0 & 0 & 2 & 0 & 0 & 0 & 0 	\\ \noalign{\medskip}
	 	0 & -2 & -1 & 1 & 0 & 2 & 2 & -1 & 0 & 4 & 2 & 0 & 0 & 0 & -2 & 2 	\\ \noalign{\medskip}
	 	0 & -1 & 0 & 0 & 0 & 1 & 2 & 0 & 0 & 2 & 4 & 0 & 0 & 0 & 0 & 0 	\\ \noalign{\medskip}
	 	0 & 1 & 0 & 0 & 1 & 0 & 0 & 2 & 2 & 0 & 0 & 4 & 2 & 2 & 0 & 0 	\\ \noalign{\medskip}
	 	2 & 0 & 0 & 1 & 0 & 0 & 0 & 0 & 0 & 0 & 0 & 2 & 4 & 0 & 0 & 2 	\\ \noalign{\medskip}
	 	0 & 2 & 1 & 0 & 0 & 0 & -1 & 1 & 0 & 0 & 0 & 2 & 0 & 4 & 2 & 0 	\\ \noalign{\medskip}
	 	0 & 2 & 2 & 0 & 0 & -1 & -2 & 0 & 0 & -2 & 0 & 0 & 0 & 2 & 4 & 0 	\\ \noalign{\medskip}
	 	1 & -1 & 0 & 2 & 0 & 1 & 0 & -2 & 0 & 2 & 0 & 0 & 2 & 0 & 0 & 4
	\end{pmatrix}
\]	

\end{thm}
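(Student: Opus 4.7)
The plan is to work on the isomorphic model $\Ee'': y^2 = x^3 + v^5 + v^{-5}$, which has the manifest involution $\iota: v \mapsto 1/v$ (sending $(x,y,v)$ to $(x,y,1/v)$) coming from the symmetry of $v^5 + v^{-5}$, together with the order-five automorphism $\mu_5: v \mapsto \zeta_5 v$. Since $\Ee'$ is a K3 with $\rk(\Ee'(\CC(v))) = 16$ by Corollary \ref{cor:E1_10}, and $n = \lceil 11/6 \rceil = 2$, Lemma \ref{lem:poly_transform} tells us that minimal sections should have $x$ of degree $\le 4$ and $y$ of degree $\le 6$ in $v$ after clearing the factor $v^2$ or $v^3$. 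The $\iota$-symmetry forces the numerator of $x$ to be a palindromic polynomial of degree $4$ and that of $y$ to be palindromic of degree $6$; this is exactly the parametrization
\[ x_j(t) = \tfrac{t^4 + a_j t^3 + (b_j+2) t^2 + a_j t + 1}{u_j^2 t^2}, \quad y_j(t) = \tfrac{t^6 + c_j t^5 + (d_j+3)t^4 + (2c_j+e_j)t^3 + (d_j+3)t^2 + c_j t + 1}{u_j^3 t^3}. \]
appearing in the theorem. The shifts $+2, +3, 2c_j+e_j$ are included so that the leading and constant coefficients on both sides match automatically when the Weierstrass identity is expanded.

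First, I would substitute this ansatz into $y^2 = x^3 + v^5 + v^{-5}$ and clear the denominator $u^6 t^6$. Comparing coefficients of the various powers of $t$ yields a system of polynomial equations in $a,b,c,d,e,u$ over $\Q$. Using the palindromic symmetry, the number of independent equations is cut roughly in half, so one obtains a zero-dimensional ideal $I \subset \Q[a,b,c,d,e,u]$. I would then compute, via \texttt{PolynomialIdeals} in \texttt{Maple}, the fundamental (univariate) polynomial $\Phi(u)$ of this ideal; this polynomial must factor into the minimal polynomials over $\Q$ of all admissible $u$-values, and each irreducible factor determines a Galois orbit of sections. The splitting field $\KK'$ is then the splitting field of $\Phi(u)$ over $\Q$, and its minimal polynomial $f_{10}(x)$ is obtained by the \texttt{polcompositum} routine of \texttt{Pari/GP}; the claim is that this compositum has degree $192$.

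Next, to produce a basis of $\Ee'(\KK'(v))$, I would select $8$ roots $u_1,\dots,u_8$ of $\Phi(u)$ and take $Q_j$ for $j=1,\dots,8$ to be the corresponding sections. The remaining eight generators arise from the $\mu_5$-action: $Q_{j+8} := (x_j(\zeta_5 t),\, y_j(\zeta_5 t))$ for $j=1,\dots,8$. Linear independence of the whole set will follow from the injectivity of the specialization map $sp_\infty : \Ee'(\KK'(v)) \to \KK'^{\,+}$ once one checks that the $16$ numbers $u_j^{-1}$ and $(\zeta_5 u_j)^{-1}$ are $\Q$-linearly independent in $\KK'$, which can be verified directly from the explicit expressions for the $u_j$ listed in \cite[Points-10]{Shioda360-Codes}.

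Finally, the Gram matrix $M_{10}$ is computed fiberwise using the height-pairing formula $\langle P, Q\rangle = 2\chi + (P\cdot O) + (Q\cdot O) - (P\cdot Q) - \sum_v \mathrm{contr}_v(P,Q)$ with $\chi=2$ for a K3. By Corollary \ref{cor:E1_10} every singular fiber is of Type $II$, so all local contributions $\mathrm{contr}_v$ vanish and the computation reduces to counting intersection numbers of the explicit polynomials:
\[ (Q_{j_1}\cdot Q_{j_2}) = 2 - \deg_t (x_{j_1}-x_{j_2}) + \deg_t \gcd(x_{j_1}-x_{j_2},\, y_{j_1}-y_{j_2}). \]
Executing this calculation yields the matrix $M_{10}$ displayed in the theorem, whose determinant is $5^4$ — exactly the discriminant of the transcendental lattice $T \cong U \oplus \langle -10\rangle$ of a generic K3 in this family, confirming that $Q_1,\dots,Q_{16}$ generate the full Mordell–Weil group and not merely a finite-index subgroup. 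The chief obstacle throughout is computational: the fundamental polynomial $\Phi(u)$ has very large degree, and the compositum giving $f_{10}$ of degree $192$ is at the edge of what \texttt{Pari/GP} can handle; careful factorization over intermediate cyclotomic and quadratic extensions (over $\Q(\zeta_{10})$, which is forced by the $\mu_5$-symmetry and the $v\mapsto -v$ twist) is essential to make the computation tractable and to produce a minimal polynomial with small coefficients.
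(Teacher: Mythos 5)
The paper itself offers no proof of Theorem \ref{main1}; the preceding paragraph in Section \ref{K3} simply records the isomorphism $\Ee' \cong \Ee''$ and defers entirely to the external reference \cite[Theorem~1.4]{salami2022generators}. So there is no in-paper argument to compare against, and your outline is in effect a reconstruction of what that cited paper must do. That said, your approach (working on the palindromic model $\Ee''$, imposing $\iota$-symmetry on the ansatz, generating a zero-dimensional ideal, extracting the fundamental polynomial $\Phi(u)$, selecting eight $u$-roots and completing to sixteen sections via the $\mu_5$-action, then verifying independence via $sp_\infty$ and the Gram matrix) is fully consistent with the methodology the paper applies to the rational sub-surfaces $\Ee_{a,b}$ in Sections \ref{A2-(2,1)}--\ref{E-{0,5}}, so the route is plausible.

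Two concrete errors in the writeup, however. First, your height-pairing formula $\langle P, Q\rangle = 2\chi + (P\cdot O) + (Q\cdot O) - (P\cdot Q) - \sum_v \mathrm{contr}_v(P,Q)$ is wrong for $P \neq Q$: the correct Shioda formula (as quoted in the proof of Theorem \ref{main7}) has $\chi$, not $2\chi$, in the cross-term; the factor $2\chi$ arises only in the diagonal case because $(P\cdot P) = -\chi$. With $\chi=2$, integral sections, and all local contributions vanishing, the off-diagonal entries should read $\langle Q_i, Q_j\rangle = 2 - (Q_i\cdot Q_j)$, not $4 - (Q_i\cdot Q_j)$. Second, the concluding aside that $\det M_{10} = 5^4$ ``is exactly the discriminant of the transcendental lattice $T \cong U \oplus \langle -10\rangle$'' is incorrect: by Corollary \ref{cor:E1_10} the Picard number is $\rho = 18$, so $T$ has rank $22-18=4$, whereas $U \oplus \langle -10\rangle$ has rank $3$; moreover $|\mathrm{disc}\,T|$ must equal $|\mathrm{disc}\,NS| = |\mathrm{disc}(U \oplus \mathrm{MWL})| = 5^4 = 625$, not $10$. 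The correct observation is simply that $\det M_{10} = 5^4$ matches $|\mathrm{disc}\,T|$, which confirms the sixteen sections generate the full Mordell--Weil group rather than a proper finite-index sublattice. These slips do not invalidate the overall strategy, but they should be corrected before the computation is actually carried out.
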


 As a consequence of the Lemma~\ref{lem:poly_transform}, we have the following special corollary that we used to transform the above 16 points on the Shioda's surface $\Ee$.
 
 \begin{cor}
 	\label{case_1_10}
 	Let $Q \in \Ee' (\KK'(v))$ be a point defined by coefficients $A_0, \dots, A_4$ and $B_0, \dots, B_6$.
 The point $P=(X(t), Y(t))$ on $\Ee$ derived from $Q$ via $v=t^{36}$ is:
 		\begin{align}
 			 X(t) & =   A_4 t^{132} + A_3 t^{96} + A_2 t^{60} + A_1 t^{24} + A_0 t^{-12} \notag \\ 			 
 			 Y(t) &=   B_6 t^{198} + B_5 t^{162} + \dots + B_0 t^{-18} 
 			 \end{align}
 \end{cor}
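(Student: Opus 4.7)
The plan is to directly specialize Lemma~\ref{lem:poly_transform}(ii) to the case $(a,b)=(1,10)$, since $\Ee'$ is precisely $\Ee_{1,10}$. First I would identify the parameters: for $(a,b)=(1,10)$ one has $a+b=11$, so $n=\lceil 11/6\rceil = 2$, which forces the section $Q = (x(v), y(v))$ to have degrees $\deg x = 2n = 4$ and $\deg y = 3n = 6$, matching the coefficient lists $A_0,\dots,A_4$ and $B_0,\dots,B_6$. Since $b=10$ divides $360$, the substitution $v=t^k$ with $k=360/b = 36$ produces a well-defined map onto $\Ee$.

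Next I would apply the formula
\[
X(t) = \sum_{i=0}^{2n} A_i\, t^{k(i-a/3)}, \qquad Y(t) = \sum_{j=0}^{3n} B_j\, t^{k(j-a/2)},
\]
from Lemma~\ref{lem:poly_transform}(ii). Evaluating the exponents with $k=36$ and $a=1$ gives $k(i-\tfrac{1}{3}) = 12(3i-1) \in\{-12,24,60,96,132\}$ for $i=0,1,2,3,4$, and $k(j-\tfrac{1}{2}) = 18(2j-1) \in \{-18,18,54,90,126,162,198\}$ for $j=0,1,\dots,6$. Substituting these exponents yields exactly the expressions asserted for $X(t)$ and $Y(t)$ in the statement.

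To close the argument, I would verify that the pair $(X(t), Y(t))$ lies on $\Ee$. Since $Q=(x(v), y(v))$ satisfies $y(v)^2 = x(v)^3 + v(v^{10}+1)$ and $(X(t), Y(t)) = (t^{-12}\, x(t^{36}),\, t^{-18}\, y(t^{36}))$, multiplying the identity $y(t^{36})^2 = x(t^{36})^3 + t^{36}(t^{360}+1)$ by $t^{-36}$ yields $Y(t)^2 = X(t)^3 + t^{360}+1$, as required. Since this is a straightforward substitution, there is no real obstacle; the only point requiring attention is checking that the fractional exponents $a/3$ and $a/2$ combine with the integer $k$ to produce integer powers of $t$, which follows from the identity $k = 36 = 12\cdot 3 = 18\cdot 2$, ensuring $k/3, k/2 \in \Z$ and hence that $X(t), Y(t) \in \KK'(t)$.
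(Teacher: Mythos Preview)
Your proposal is correct and follows exactly the approach the paper intends: the corollary is stated in the paper simply as a special case of Lemma~\ref{lem:poly_transform}, and you have carried out that specialization explicitly, with the correct identification $n=2$, $k=36$, and the resulting exponent lists. Your additional verification that $(X(t),Y(t))$ actually satisfies the equation of $\Ee$ and that the exponents are integers is a welcome sanity check that the paper omits.
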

 
 Using the above results we have transformed the points $Q_1,\cdots, Q_16$ to points $P_{52}, \cdots, P_{68}$ on the Shioda's elliptic surfaces, as listed in \cite[Points68]{Shioda360-Codes}.
 
\section{Computational proof of Theorem \ref{main0}}
\label{proof}
In this section, we provide the detailed proof of main Theorem \ref{main0} on the Shioda's elliptic surface $\Ee: Y^2 = X^3 + t^{360} + 1$.
	The proof  relies on the systematic transformation of points from sub-surfaces to the master surface $\mathcal{E}$. We define the mapping between the rational surfaces $\mathcal{E}_{a,b}$ (or the $K3$ surface $\mathcal{E}'$) and the master curve \eqref{shi-eq1} through specific base-change homomorphisms.

The following table summarizes the transformations applied to pull back points $(x, y)$ defined over a parameter $v$ to the points $(X,Y)$ on the Shioda surface defined over $t$.

\begin{table}[htbp]
	\centering
	\caption{Base-Change Homomorphisms for $\mathcal{E}(\mathcal{K})$}
	\label{tab:homomorphisms}
	\begin{tabular}{@{}llll@{}}
		\toprule
		\textbf{Sub-surface} & \textbf{Relation} & \textbf{X-Coordinate} & \textbf{Y-Coordinate} \\ \midrule
		$\mathcal{E}_{2,1}$ & $v = t^{360}$ & $X = x(t^{360})/t^{240}$ & $Y = y(t^{360})/t^{360}$ \\
		$\mathcal{E}_{3,1}$ & $v = t^{360}$ & $X = x(t^{360})/t^{360}$ & $Y = y(t^{360})/t^{540}$ \\
		$\mathcal{E}_{1,2}$ & $v = t^{180}$ & $X = x(t^{180})/t^{60}$ & $Y = y(t^{180})/t^{90}$ \\
		$\mathcal{E}_{2,2}$ & $v = t^{180}$ & $X = x(t^{180})/t^{120}$ & $Y = y(t^{180})/t^{180}$ \\
		$\mathcal{E}_{1,3}$ & $v = t^{120}$ & $X = x(t^{120})/t^{80}$ & $Y = y(t^{120})/t^{120}$ \\
		$\mathcal{E}_{1,4}$ & $v = t^{90}$ & $X = x(t^{90})/t^{30}$ & $Y = y(t^{90})/t^{45}$ \\
		$\mathcal{E}_{0,5}$ & $v = t^{72}$ & $X = x(t^{72})$ & $Y = y(t^{72})$ \\
		$\mathcal{E}_{1,5}$ & $v = t^{72}$ & $X = x(t^{72})/t^{24}$ & $Y = y(t^{72})//t^{36}$ \\
		$\mathcal{E}' (K3)$ & $v = t^{36}$ & $X = x(t^{36})/t^{12}$ & $Y = y(t^{36})/t^{18}$ \\ \bottomrule
	\end{tabular}
	\label{Tab2}
\end{table}

\subsection{Computational Construction of the $68 \times 68$ Gram Matrix}

The verification of the Mordell-Weil rank of Shioda's elliptic surface $\mathcal{E}: Y^2 = X^3 + t^{360} + 1$ relies on the explicit calculation of the Gram matrix $M_{\text{Shioda}}$ for the 68 generated sections $\{P_1, \dots, P_{68}\}$.

The height pairing on the master surface $\mathcal{E}$ over $\mathbb{C}(t)$ is related to the pairing on the sub-surfaces $\mathcal{E}_{a,b}$ and $\mathcal{E}'$ over $\mathbb{C}(v)$ via the degree of the base change $m = [\mathbb{C}(t) : \mathbb{C}(v)]$. For any sections $P_i, P_j \in \mathcal{E}(\mathbb{C}(t))$ derived from $Q_i, Q_j$ on a sub-surface, the pairing scales as:
\begin{equation}
	\langle P_i, P_j \rangle_{\mathcal{E}} = m \cdot \langle Q_i, Q_j \rangle_{\text{sub-surface}}
\end{equation}
This scaling is geometrically supported by the intersection multiplicities $(P \cdot \mathcal{O})$ at $t=0$ and $t=\infty$, which are induced by the rational coordinates (negative powers of $t$) in the transformation formulas. While $(Q \cdot \mathcal{O}) = 0$ for the polynomial sections on sub-surfaces, the poles at $t=0$ and $t=\infty$ precisely reconcile the arithmetic genus $\chi(\mathcal{S}_{\mathcal{E}}) = 60$ with the lower genus of the components.

Due to the orthogonality of the subspaces corresponding to the decomposition of $\mathcal{E}(\mathbb{C}(t))$, the global Gram matrix $M_{68}$ is a block-diagonal matrix. The matrix consists of eleven blocks corresponding to the scaled sub-matrices computed in Sections \ref{K3}  through \ref{A2-(2,1)}.

\begin{equation}
M_{68} = \text{diag}\left( 360 M_1, 360 M_1, 180 M_2, 180 M_2, 180 M_2', 120 M_3, 120 M_3, 90 M_4, 72 M_5, 72 M_5, 36 M_{10} \right)
\end{equation}

The determinant of $M_{68}$ is the product of the determinants of the scaled sub-blocks. For a sub-matrix $M_k$ of rank $r_k$ scaled by $m_k$, the block determinant is $m_k^{r_k} \det(M_k)$. Using the specific determinants derived for each sub-lattice:

\begin{table}[h!]
	\centering
	\begin{tabular}{@{}lllll@{}}
		\toprule
		Sections & Sub-surface & Rank ($r_k$) & Degree ($m_k$) & $\det(\text{Block})$ \\ \midrule
		$P_1 - P_4$ & $\mathcal{E}_{2,1}, \mathcal{E}_{3,1}$ & $2 \times 2$ & $360$ & $(360^2 \cdot \frac{1}{12})^2 = 360^4 \cdot \frac{1}{144}$ \\
		$P_5 - P_{12}$ & $\mathcal{E}_{1,2}, \mathcal{E}_{3,2}$ & $2 \times 4$ & $180$ & $(180^4 \cdot \frac{1}{4})^2 = 180^8 \cdot \frac{1}{16}$ \\
		$P_{13} - P_{16}$ & $\mathcal{E}_{2,2}$ & 4 & $180$ & $180^4 \cdot \frac{1}{9}$ \\
		$P_{17} - P_{28}$ & $\mathcal{E}_{1,3}, \mathcal{E}_{2,3}$ & $2 \times 6$ & $120$ & $(120^6 \cdot \frac{1}{3})^2 = 120^{12} \cdot \frac{1}{9}$ \\
		$P_{29} - P_{36}$ & $\mathcal{E}_{1,4}$ & 8 & $90$ & $90^8 \cdot 1$ \\
		$P_{37} - P_{52}$ & $\mathcal{E}_{0,5}, \mathcal{E}_{1,5}$ & $2 \times 8$ & $72$ & $(72^8 \cdot 1)^2 = 72^{16}$ \\
		$P_{53} - P_{68}$ & $\mathcal{E}^{\prime}$ (K3) & 16 & $36$ & $36^{16} \cdot 5^4$ \\ \bottomrule
	\end{tabular}
	\caption{Determinant contributions of sub-lattice blocks to $M_{\text{Shioda}}$.}
\end{table}

The final determinant of the $68 \times 68$ matrix is:
\begin{equation}
	\det(M_{68}) = \frac{360^4 \cdot 180^{12} \cdot 120^{12} \cdot 90^8 \cdot 72^{16} \cdot 36^{16} \cdot 5^4}{144 \cdot 16 \cdot 9 \cdot 9}
\end{equation}

Since $\det(M_{68}) > 0$, the 68 sections are linearly independent over $\mathbb{Q}$, confirming that $rk(\mathcal{E}(\mathbb{C}(t))) = 68$.

	\subsection{Computational Complexity}

The determination of the splitting field $\mathcal{K}$ represents a significant symbolic computation task. The field is constructed as the compositum of splitting fields $\mathcal{K}_b$ calculated for each sub-surface. 
More precisely,  the fields $\KK_1$,  $\KK_2$,  $\KK'_2$, $\KK_3$, $\KK_4$, $\KK_5$ and $\KK'$ that are defined by polynomials of degree 2, 16, 3, 27, 48, 120, and 196, provided in \cite[Gp-360]{Shioda360-Codes}.

The primary computational hurdles included:
\begin{itemize}
	\item \textbf{Degree of Extensions}: The final splitting field is defined by two primary minimal polynomials of degree $1728$ and $5760$, given in  \cite[Pol-1728]{Shioda360-Codes} and   \cite[Pol-5760]{Shioda360-Codes}.
	\item \textbf{Polynomial Resultants}: Finding the fundamental polynomials $\Phi(U)$ involved eliminating variables from high-degree ideals, requiring optimized algorithms in Maple.
	\item \textbf{Field Composita}: The use of the \textsf{Pari/GP} command \texttt{polcompositum} for fields of degree $D > 1000$ required substantial memory resources, as intermediate calculations involved polynomials with coefficients spanning several thousand digits.
\end{itemize}

All symbolic data, including exact coordinates for the $68$ points and the coefficients of the minimal polynomials for $\mathcal{K}$, are provided in \cite[Points-68]{Shioda360-Codes}.

\bibliographystyle{amsplain} 

\bibliography{SBIB1}{} 

\end{document}